\newtheorem*{thm}{Theorem}
\newtheorem{prop}{Proposition}
\newtheorem{lem}{Lemma}
\newtheorem{rem}{Remark}
\newtheorem{clm}{Claim}
\title{Some examples of minimal Cantor set for IFSs with overlap}
\author{Katsutoshi Shinohara \thanks{FIRST, Aihara Innovative Mathematical Modelling Project,
JST, 4-6-1 Komaba, Meguro-ku, Tokyo 153-8505, Japan
Institute of Industrial Science, University of Tokyo, 4-6-1
Komaba, Meguro-ku, Tokyo 153-8505, Japan}}
\date{}
\begin{document}

\maketitle

\begin{abstract}
We give some examples of IFSs 
with overlap on the interval such that 
the semigroup action they give rise to has a minimal set homeomorphic to the Cantor set.

{ \smallskip
\noindent \textbf{Keywords:} 
IFS with overlap, Cantor set, semigroup action, minimality. 

\noindent \textbf{2010 Mathematics Subject Classification:} 
37B05,  37E05, 28A80}
\end{abstract}

\section{Introduction}

In \cite{K}, the minimality of semigroup actions 
(equivalently, minimal set of iterated function systems, abbreviated as IFSs)
on the interval are discussed
and several examples of non-minimal actions are presented there.
For these examples, we certainly know 
that the action is not minimal but we could not decide the ``shape" of the 
minimal set (similar type of questions are extensively studied 
recently, see for example \cite{S} for complex dynamical systems case). 

In the case of group actions on the circle, there 
is a famous trichotomy of the minimal set (see for example Chapter~2 of \cite{N}): 
Either it is a finite set, equal to the whole manifold, or homeomorphic to the Cantor set. 
Thus once we know that the minimal set is niether the finite orbit nor the whole manifold, 
then we can immediately conclude that it is homeomorphic to the Cantor set.
On the other hand, in semigroup action case, because of the lack of homogeneity of the minimal set, 
this trichotomy is no longer valid. 
The best thing we know in general is, as is stated in \cite{BR} (see Theorem~5.2), 
the trichotomy of the following type: 
the minimal set is either a finite set, homeomorphic to the Cantor set or a closed set with non-empty interior. 
This trichotomy is not enough to conclude the type of minimal set 
just from the non-minimality of the action.

In this article, we give an example of class of IFSs 
with a minimal set homeomorphic to the Cantor set.
They are produced by performing some modifications 
on the example presented in \cite{K}. 
Thus, still I do not know the type of minimal set for the original examples,
see also Remark~\ref{r.open}.

After the announcement of this example, Masayuki Asaoka (Kyoto University) showed me 
another example of minimal Cantor set using the measure theoretic argument.  
In the appendix we present that example.

{\bf Acknowledgements} \quad
This research is supported by the Aihara Project, the FIRST program
from JSPS, initiated by CSTP. 
The author is thankful to Masayuki Asaoka, 
Kengo Shimomura,
Hiroki Sumi 
and 
Dmitry Turaev 
for helpful conversations.

\section{Axiomatic description of the example}

In this section, we prepare some definitions,
give the axiomatic description of the example and 
the precise statement of our main result.

\subsection{Overlapping}
We consider IFSs on the closed unit interval 
$I :=[0, 1]$
generated by two maps 
$f, g: I \to I$ satisfying the following conditions:
\begin{itemize}
\item $f, g:I \to I $ are $C^1$-diffeomorphisms on their images.
\item $f(0) = 0$ and $g(1) =1$.
\item $f(x) < x < g(x)$ for $x \in (0, 1)$.
\item $0 < g(0) < f(1) <1$.
\end{itemize}
We denote the set of the pairs of diffeomorphisms satisfying these conditions by
 $\mathcal{A} \subset (\mathrm{Diff}_{\mathrm{im}}^1(I))^2$ (by $\mathrm{Diff}_{\mathrm{im}}^1(I)$
we denote the set of $C^1$-maps from $I$ to itself which is diffeomorphism on its image).

We prepare some notations and basic definitions.
We denote the semigroup generated by $f$ and $g$ by $\langle f, g\rangle_{+}$. 
It acts on $I$ in a natural way.
For $x \in I$, the orbit of $x$, denoted by $\mathcal{O}_{+}(x)$, is defined to be the set 
$\{ \phi (x) \mid \phi \in \langle f, g\rangle_{+} \}$.
A non-empty set $K \subset I$ is called a \emph{minimal set} 
if for every $x \in K$, $\overline{\mathcal{O}_{+}(x)} = K$, 
where $\overline{X}$ denotes the closure of $X$.
Finally, we say that the action of 
$\langle f, g\rangle_{+}$ is minimal 
if $I$ is the minimal set, in other words, every point in $I$ has a dense orbit.

For IFSs generated by $ (f, g) \in\mathcal{A}$, we can prove the following (see Lemma~1 in \cite{K}): 
\begin{prop} 
There exists a unique minimal set $K$.
Furthermore, we have $K =\overline{\mathcal{O}_{+}(0)} =\overline{\mathcal{O}_{+}(1)}$.
\end{prop}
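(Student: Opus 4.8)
The plan is to identify the minimal set explicitly: I claim $K:=\overline{\mathcal{O}_{+}(0)}$ works and coincides with $\overline{\mathcal{O}_{+}(1)}$. Everything flows from one elementary observation near the endpoints. Since $f$ is injective with $f(0)=0$ and $f(x)<x$ on $(0,1)$, for any $x\in(0,1)$ the sequence $(f^{n}(x))_{n}$ lies in $(0,1)$ and is strictly decreasing, hence converges to a fixed point of $f$ in $[0,1)$; as $f$ has no fixed point in $(0,1)$, this limit is $0$. Using $0<g(0)<f(1)<1$ the same holds for $x=1$ (and trivially for $x=0$), so $f^{n}(x)\to 0$ for \emph{every} $x\in I$. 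Symmetrically, $g^{n}(x)\to 1$ for every $x\in I$. I would record this as a preliminary lemma, proved by the standard bounded-monotone-sequence argument.

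Next I would note the soft facts valid for any semigroup action by continuous maps: $\overline{\mathcal{O}_{+}(x)}$ is forward invariant (by continuity of $f,g$), and if $y\in\overline{\mathcal{O}_{+}(x)}$ then $\overline{\mathcal{O}_{+}(y)}\subseteq\overline{\mathcal{O}_{+}(x)}$ (writing $y=\lim_{n}\phi_{n}(x)$, an arbitrary $\psi\in\langle f,g\rangle_{+}$ gives $\psi(y)=\lim_{n}(\psi\phi_{n})(x)\in\overline{\mathcal{O}_{+}(x)}$). Combined with the preliminary lemma: for every $x\in I$ one has $0\in\overline{\mathcal{O}_{+}(x)}$ since $f^{n}(x)\to 0$, and likewise $1\in\overline{\mathcal{O}_{+}(x)}$. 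In particular $1\in\overline{\mathcal{O}_{+}(0)}$ and $0\in\overline{\mathcal{O}_{+}(1)}$; feeding these into the inclusion above yields $\overline{\mathcal{O}_{+}(1)}\subseteq\overline{\mathcal{O}_{+}(0)}$ and $\overline{\mathcal{O}_{+}(0)}\subseteq\overline{\mathcal{O}_{+}(1)}$, hence $\overline{\mathcal{O}_{+}(0)}=\overline{\mathcal{O}_{+}(1)}=:K$. Note $0\in\overline{\mathcal{O}_{+}(0)}$ because $f(0)=0$, so $K$ is nonempty.

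Minimality of $K$ is then immediate: for $x\in K$ we have $\overline{\mathcal{O}_{+}(x)}\subseteq K$ by the soft fact, and $K=\overline{\mathcal{O}_{+}(0)}\subseteq\overline{\mathcal{O}_{+}(x)}$ because $0\in\overline{\mathcal{O}_{+}(x)}$; so $\overline{\mathcal{O}_{+}(x)}=K$. For uniqueness, if $K'$ is any minimal set, pick $x\in K'$; then $0\in\overline{\mathcal{O}_{+}(x)}=K'$, so $\overline{\mathcal{O}_{+}(0)}\subseteq K'$, i.e. $K\subseteq K'$, and since $K$ (equivalently $K'$) is minimal we conclude $K=K'$.

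I do not anticipate a genuine obstacle here — the statement is ``soft'' — so the only point requiring care is the bookkeeping for \emph{semigroup} rather than group orbits: $\langle f,g\rangle_{+}$ need not contain the identity, so $x$ itself may fail to lie in $\mathcal{O}_{+}(x)$, which is why the argument is phrased throughout in terms of $\overline{\mathcal{O}_{+}(\cdot)}$ and uses $f(0)=0$, $g(1)=1$ to return the two endpoints to their own orbit closures. It is also worth remarking which hypotheses are actually used: only the endpoint normalizations and the absence of interior fixed points of $f$ and $g$; the genuine overlap condition $g(0)<f(1)$ plays no role in this proposition and becomes relevant only later.
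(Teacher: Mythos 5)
Your proof is correct. The paper itself gives no argument for this proposition --- it simply cites Lemma~1 of \cite{K} --- and your self-contained proof (the endpoint attraction $f^{n}(x)\to 0$, $g^{n}(x)\to 1$ for all $x$, combined with the orbit-closure nesting $y\in\overline{\mathcal{O}_{+}(x)}\Rightarrow\overline{\mathcal{O}_{+}(y)}\subseteq\overline{\mathcal{O}_{+}(x)}$) is exactly the standard route one would expect that reference to take, with the semigroup bookkeeping handled properly. One small correction to your closing remark: you do use more than the normalizations $f(0)=0$, $g(1)=1$ and the absence of interior fixed points, namely $g(0)>0$ and $f(1)<1$ (only the overlap inequality $g(0)<f(1)$ itself is irrelevant); if, say, $f(1)=1$ were allowed, then $\{1\}$ would be a minimal set different from $\overline{\mathcal{O}_{+}(0)}$ and both the uniqueness and the identity $K=\overline{\mathcal{O}_{+}(0)}=\overline{\mathcal{O}_{+}(1)}$ would fail.
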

For $(f, g) \in \mathcal{A}$, 
put $W :=[g(0), f(1)] $ and call it \emph{overlapping region} ($W$ is the overlap between $f(I)$ and $g(I)$ ).
Note that by definition of $\mathcal{A}$, $W$ is an interval with non-empty interior.

\subsection{Alignment of fundamental domains}

We define two sequences of intervals $\{F_n \}$ and $\{G_n\}$
as follows:
\begin{itemize}
\item $F_{n} := [f^{n+1}(1), f^n(1) ]$.
\item $G_{n} := [g^n(0), g^{n+1}(0)]$.
\end{itemize}
Let us consider the following property for $(f, g)$:
\begin{center}
$W =f(I) \cap g(I)$ is contained in the interior of $F_1 \cup G_1$. 
\end{center}
We call this property \emph{single overlapping property} and denote it by 
$\mathsf{So}$. Note that this condition is equivalent to $f^2(1) < g(0)$ and $f(1) < g^2(0)$.
This implies that $\mathsf{So}$  is a $C^0$-open property in $\mathcal{A}$.

\subsection{Existence of the hole}
Suppose $(f, g) \in \mathcal{A}$ satisfies $\mathsf{So}$.
Consider the following condition (in the following, by $\mathrm{int}(X)$
we denote the set of the interior points of $X$):
\begin{center}
There are closed intervals 
$H_f \subset \mathrm{int}(F_1 \setminus W)$ and
$H_g \subset \mathrm{int}(G_1 \setminus W)$
with non-empty interior such that $g(H_f) = H_g$ and $f(H_g) = H_f$.
\end{center}
\begin{figure}
\centering
\includegraphics{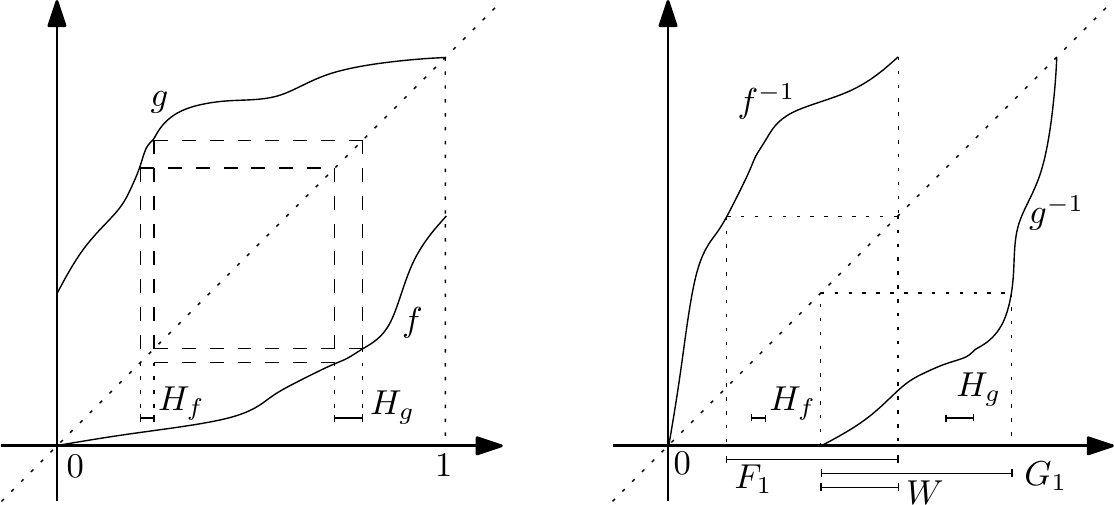}
 \caption{An example of IFS in $\mathcal{A}$ with $\mathsf{So}$ and $\mathsf{Ho}$.}
\label{fig.aliint}
\end{figure}
We call this property \emph{hole property} and denote it 
by $\mathsf{Ho}$ (see Figure~\ref{fig.aliint}).
If $(f, g) \in \mathcal{A}$ satisfies $\mathsf{So}$ and $\mathsf{Ho}$, then we have 
the following (see also Example~2 in \cite{K}):
\begin{prop}\label{p.tenganai}
If $(f, g) \in \mathcal{A}$ satisfies $\mathsf{So}$ and $\mathsf{Ho}$,
then the (unique) minimal set $K$ of $\langle f, g\rangle_{+}$ does not coincides with the whole interval. 
More precisely, we have $K \subset I \setminus \mathrm{int}(H_f \cup H_g)$.
\end{prop}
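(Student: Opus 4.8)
The plan is to combine the description $K=\overline{\mathcal{O}_{+}(0)}$ from the previous proposition with the stronger fact that the forward orbit $\mathcal{O}_{+}(0)$ never meets $H_f\cup H_g$ at all. Granting this, note that $\mathrm{int}(H_f\cup H_g)\subset H_f\cup H_g$, so $I\setminus\mathrm{int}(H_f\cup H_g)$ is a \emph{closed} set containing $I\setminus(H_f\cup H_g)$, hence containing $\mathcal{O}_{+}(0)$; therefore it contains $K=\overline{\mathcal{O}_{+}(0)}$, which is exactly the claimed inclusion. (Since $H_f$ has non-empty interior, this in particular gives $K\neq I$.)

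The heart of the matter is a backward-invariance statement; here $f^{-1}(S)$ and $g^{-1}(S)$ denote preimages inside $I$. I claim $f^{-1}(H_f\cup H_g)\subset H_f\cup H_g$ and $g^{-1}(H_f\cup H_g)\subset H_f\cup H_g$. To prove it I compute the four sets $f^{-1}(H_f)$, $f^{-1}(H_g)$, $g^{-1}(H_f)$, $g^{-1}(H_g)$. Since $f$ and $g$ are injective on $I$ (being $C^1$-diffeomorphisms onto their images) and $\mathsf{Ho}$ gives $f(H_g)=H_f$, $g(H_f)=H_g$, we get $f^{-1}(H_f)=H_g$ and $g^{-1}(H_g)=H_f$. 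For the cross terms I use $\mathsf{So}$, i.e. $f^{2}(1)<g(0)$ and $f(1)<g^{2}(0)$: it forces $H_f\subset\mathrm{int}(F_1\setminus W)=(f^{2}(1),g(0))$, which is strictly to the left of $g(I)=[g(0),1]$, so $g^{-1}(H_f)=\emptyset$; symmetrically $H_g\subset\mathrm{int}(G_1\setminus W)=(f(1),g^{2}(0))$ is strictly to the right of $f(I)=[0,f(1)]$, so $f^{-1}(H_g)=\emptyset$. Hence $f^{-1}(H_f\cup H_g)=H_g\subset H_f\cup H_g$ and $g^{-1}(H_f\cup H_g)=H_f\subset H_f\cup H_g$, proving the claim.

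With the claim in hand, suppose for contradiction that $\phi(0)\in H_f\cup H_g$ for some $\phi=\tau_1\circ\cdots\circ\tau_k\in\langle f,g\rangle_{+}$ with each $\tau_j\in\{f,g\}$. Set $y_k:=0$ and $y_{j-1}:=\tau_j(y_j)$, so $y_0=\phi(0)$. Argue by induction on $j$: $y_0\in H_f\cup H_g$ by assumption, and whenever $y_{j-1}=\tau_j(y_j)\in H_f\cup H_g$, the claim gives $y_j\in\tau_j^{-1}(H_f\cup H_g)\subset H_f\cup H_g$. In particular $y_k=0\in H_f\cup H_g$. But $H_f\subset F_1=[f^{2}(1),f(1)]\subset(0,1)$ and $H_g\subset G_1=[g(0),g^{2}(0)]\subset(0,1)$, so $0\notin H_f\cup H_g$, a contradiction. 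Hence $\mathcal{O}_{+}(0)\cap(H_f\cup H_g)=\emptyset$, and the proposition follows as in the first paragraph. (The identical argument works for $\mathcal{O}_{+}(1)$; note it actually shows $\mathcal{O}_{+}(0)$ avoids all of $H_f\cup H_g$, so only boundary points of the holes can survive in $K$, which is why the statement is phrased with $\mathrm{int}$.)

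The one delicate point is the preimage bookkeeping in the second paragraph: one must check that $\mathsf{So}$, together with the precise placements $H_f\subset\mathrm{int}(F_1\setminus W)$ and $H_g\subset\mathrm{int}(G_1\setminus W)$, really does force $H_f\cap g(I)=\emptyset$ and $H_g\cap f(I)=\emptyset$ — equivalently, that in backward time each hole is reachable only through the single generator that carries it onto the other hole. Once this is confirmed, the injectivity input and the peeling induction are entirely routine.
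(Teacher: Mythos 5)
Your proof is correct and follows essentially the same route as the paper: the paper also pulls an orbit point of $0$ backwards through the holes, using $f(H_g)=H_f$, $g(H_f)=H_g$ and the fact that $\mathsf{So}$ makes each hole disjoint from the image of the other generator, until the backward chain must reach $0$, giving a contradiction. Your version merely packages this as an explicit backward-invariance claim plus induction on the word length, which is a clean formalization of the paper's ``repeat this process'' step.
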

\begin{proof}
Suppose $K \cap  \mathrm{int}(H_f \cup H_g) \neq \emptyset$. 
We consider the case $ K \cap  \mathrm{int}(H_f) \neq \emptyset$
(The proof of the case $ K \cap  \mathrm{int}(H_g) \neq \emptyset$ is similar so we omit it).
Then we know that there exists a point $x_0 \in \mathcal{O}_{+}(0) \cap H_{f}$.
Since $0\not \in \mathrm{int}(H_f)$, 
it means that there exists a point $x_1 \in \mathcal{O}_{+}(0) $
such that $x_0=f(x_1)$ or $x_0=g(x_1)$ holds. However, 
since $\mathrm{int}(H_f) \cap g(I) = \emptyset$ by definition, 
the second option cannot happen. Hence we have $x_0 =g(x_1) \in \mathrm{int}(H_g)$
and it implies $x_1 \in \mathrm{int}(H_g)$. 
By repeating this process, we continue  taking the backward image of $x$ contained in 
$\mathrm{int}(H_f \cup H_g)$. 
At some moment it must be equal to $0$ but this is a contradiction.
\end{proof}

\subsection{Eventual expansion property}

We define two maps $\mathcal{F}: F_1 \setminus \{ f(1) \} \to G_1$ and 
$\mathcal{G}: G_1\setminus \{ g(0)\} \to F_1$ as follows:
For $x \in F_1 \setminus \{ f(1) \}$, 
let $n(x) $ be the least non-negative integer such that $g^{-n(x)}(f^{-1}(x)) \in G_1$ holds. 
This defines a piecewise $C^1$ map $\mathcal{F}: F_1 \setminus \{ f(1) \} \to G_1$
by setting $\mathcal{F}(x) := g^{-n(x)}(f^{-1}(x))$ 
($\mathcal{F}$ is just an ``induced map" from $F_1$ to $G_1$ of the inverse system $(f^{-1}, g^{-1})$).
We define $\mathcal{G}$ similarly, exchanging the role of $f$ and $g$.

Then let us consider the following property: 
\begin{center}
$\mathcal{F}$ is uniformly expanding outside $H_f$ and
$\mathcal{G}$ is as such outside $H_g$.
\end{center}
More precisely,
there exists $\mu > 1$ such that at each $y \in F_1 \setminus H_f$ (resp. $y \in G_1 \setminus H_g$), 
$\mathcal{F}'(y) > \mu$ (resp. $\mathcal{G}'(y) > \mu$)
 where $y$ ranges over the points for which the term $\mathcal{F}'(y)$
(resp. $\mathcal{G}'(y)$) makes sense. 
We call this property \emph{eventual expansion property} and denote it by $\mathsf{Ee}$.

In general, the property $\mathsf{Ee}$ may be violated by some small perturbation. 
However, if $f'(0), g'(1) <1$ then we see that  $\mathsf{Ee}$ is a $C^1$-open property.

\subsection{Castration on the overlap}
Finally, we define the castration property.
We put 
$R_f := \mathcal{F}^{-1}(H_g) \subset F_1$ and 
$R_g := \mathcal{G}^{-1}(H_f) \subset G_1$.
We call them ruination region. Note that by construction $R_f$
is the disjoint union of infinitely many intervals accumulating to $f(1)$ and 
$R_g$ to $g(0)$. Then, the castration property is stated as follows:
\begin{center}
$W \subset \mathrm{int}( R_f ) \cup \mathrm{int}(R_g)$. 
\end{center}
We denote this property by $\mathsf{Ca}$.

By the structure of $R_f$ and $R_g$, we observe the following:
\begin{rem}\label{r.hairu}
If $\mathsf{Ca}$ holds, then $g(0) \in \mathrm{int}(R_f)$ and 
$f(1) \in \mathrm{int}(R_g)$.
\end{rem}
A priori, the openness of the property
$\mathsf{Ca}$ is not clear, since it involves the conditions of infinitely many 
intervals.  However, by Remark~\ref{r.hairu}, together with the structure of $R_f$ and $R_g$,
we see that it only involves the conditions of finitely many intervals.
As a corollary, we see that $\mathsf{Ca}$ is  a $C^0$-open property in $\mathcal{A}$.


\subsection{Main statement}
We denote the set of pairs of diffeomorphisms in $\mathcal{A}$ satisfying 
the conditions $\mathsf{So}$, $\mathsf{Ho}$, $\mathsf{Ee}$ and $\mathsf{Ca}$ by $\mathcal{C}$.
Now, we state our main result.
\begin{thm}
If $(f,  g)$ is in $\mathcal{C}$ 
then the (unique) minimal set of $\langle f, g\rangle_{+}$ is homeomorphic to the Cantor set.
\end{thm}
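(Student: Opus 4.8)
The plan is to show that $K$, being a nonempty compact subset of $\mathbb R$, is homeomorphic to the Cantor set by checking the two remaining defining properties: it is perfect and it has empty interior. Nonemptiness and compactness are already in hand ($K$ is the closure of an orbit). I would first record some consequences of minimality: since $\overline{\mathcal O_{+}(x)}=K$ for every $x\in K$ and $K$ is closed, $K$ is forward invariant and in fact $K=f(K)\cup g(K)$ (the right-hand side is a nonempty closed forward-invariant subset of $K$, hence equals $K$); likewise the only nonempty closed forward-invariant subset of $K$ is $K$ itself. Also $K$ is infinite: $g(0)\in K$ and the points $f^{n}(g(0))\in(0,1)$ are pairwise distinct because $f(x)<x$ on $(0,1)$. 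Perfectness then follows: the non-isolated points of $K$ form a closed set (the isolated points are a relatively open subset), it is forward invariant since $f,g$ are homeomorphisms onto their images, and it is nonempty since $K$ is an infinite compact set; so by minimality it is all of $K$.

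The core of the argument is that $K$ contains no nondegenerate interval. Suppose for contradiction $J\subseteq K$ is such an interval. The first tool is a \emph{forced pull-back}: if $y\in K$ lies outside $g(I)$ then, as $K=f(K)\cup g(K)$, we get $y\in f(K)$, so $f^{-1}(y)\in K$; iterating this down the $G$-tower — legitimate because $\mathsf{So}$ gives $g^{2}(0)>f(1)$, so every point of $G_m$ with $m\ge 2$ lies outside $f(I)$ — shows that $\mathcal F$ maps $K\cap(F_1\setminus W)$ into $K$, and symmetrically $\mathcal G$ maps $K\cap(G_1\setminus W)$ into $K$. Combined with Proposition~\ref{p.tenganai} ($K$ misses $\mathrm{int}(H_f\cup H_g)$) and the fact that each branch of $\mathcal F$ is a diffeomorphism, this already forbids any $K$-interval from straddling $g(0)$ (and, symmetrically, $f(1)$): a subinterval just to the left of $g(0)$ lies in $R_f$ (since $g(0)\in\mathrm{int}(R_f)$ by Remark~\ref{r.hairu}), hence is carried by $\mathcal F$ into $H_g\cap K$, a set of at most two points, contradicting injectivity. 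Using in addition that an interval inside $F_n$ ($n\ge 2$) pulls back under $f^{-(n-1)}$ into $K\cap F_1$, and the symmetric statement for the $G$-tower, I would reduce to $J\subseteq K\cap(F_1\cup G_1)$; by the no-straddling property $J$ then lies in $F_1\setminus W$, in $G_1\setminus W$, or in $W$, and the first two cases are interchanged by the $f\leftrightarrow g$ symmetry.

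The second tool — the role of $\mathsf{Ca}$ — handles the overlap. If $J\subseteq K\cap W$, then $J\subseteq\mathrm{int}(R_f)\cup\mathrm{int}(R_g)$ by $\mathsf{Ca}$; but $K$ meets at most one of $\mathrm{int}(R_f),\mathrm{int}(R_g)$, because a point of $K$ in $\mathrm{int}(R_f)$ must lie in $g(K)\setminus f(K)$ — a putative $f$-preimage, pulled back down the $G$-tower as above, would land in $\mathrm{int}(H_g)\cap K=\varnothing$ — and symmetrically for $R_g$; so by connectedness $J$ lies entirely in, say, $\mathrm{int}(R_f)$, hence in $g(K)$, whence $g^{-1}(J)\subseteq K$, and moreover $g^{-1}(J)\subseteq[0,g(0))$ since $\mathsf{So}$ gives $f(1)<g^{2}(0)$, so $g^{-1}$ carries all of $W$ below $g(0)$. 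Thus a $K$-interval inside $W$ always returns, via $g^{-1}$ or $f^{-1}$, to a $K$-interval outside $W$. Assembling: one follows $J$ along an infinite itinerary of nondegenerate $K$-intervals, all inside the fixed bounded set $F_1\cup G_1$, applying $\mathcal F$ or $\mathcal G$ whenever the current interval lies in $F_1\setminus W$ or $G_1\setminus W$ and detouring through $g^{-1}/f^{-1}$ (then back down a tower into $F_1\cup G_1$) whenever it lies in $W$. By $\mathsf{Ee}$ each application of $\mathcal F$ or $\mathcal G$ multiplies the length by at least $\mu>1$ (the interval, being in $K$, avoids $\mathrm{int}(H_f)$, where expansion holds), while a $K$-interval landing in $W$ has length at most $\max(|H_f|,|H_g|)/\mu$ (it gets expanded by $\mathcal F$ or $\mathcal G$ into $H_g$ or $H_f$). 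Hence the lengths grow unboundedly — impossible inside $F_1\cup G_1$.

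The step I expect to be the main obstacle is precisely this final assembly: one must verify that returning from $W$, and from the lower tower intervals, back into the domain $F_1\cup G_1$ of the induced maps is always possible, that the detour maps $f^{-1},g^{-1}$ (restricted to fixed compact pieces) lose only a controlled amount of length, and that detours cannot pile up so as to absorb the exponential gain from the productive $\mathcal F,\mathcal G$ steps. If one does not assume $f$ and $g$ are contractions, extra care is needed near the fixed points $0$ and $1$, where $f^{-1},g^{-1}$ need not expand; the expansion of $\mathcal F,\mathcal G$ furnished by $\mathsf{Ee}$ is meant to be exactly what compensates. Everything else is bookkeeping around Proposition~\ref{p.tenganai}, the inequalities $f^{2}(1)<g(0)<f(1)<g^{2}(0)$ from $\mathsf{So}$, the relations $g(H_f)=H_g$, $f(H_g)=H_f$ from $\mathsf{Ho}$, and Remark~\ref{r.hairu}.
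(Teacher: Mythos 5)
Your overall route --- reduce to showing $K$ contains no nondegenerate interval, pull $K$-intervals back through the induced maps $\mathcal F,\mathcal G$ (your ``forced pull-back'' is exactly the content of Lemma~\ref{l.aruyo}, and your pointwise exclusion is Lemma~\ref{l.naiyo}), use $\mathsf{Ca}$ together with Remark~\ref{r.hairu} at $g(0)$ and $f(1)$, and contradict boundedness via the expansion in $\mathsf{Ee}$ --- is essentially the paper's argument (the paper tracks an arbitrary interval meeting $K$ and extracts a gap at a stopping time rather than assuming $J\subset K$, but the mechanism is the same). The genuine gap is the step you yourself flag as the main obstacle: the final assembly is not carried out, and the difficulty you anticipate there comes from a missed identification. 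Your ``detour'' for a $K$-interval $J'\subset W\cap\mathrm{int}(R_f)$ --- apply $g^{-1}$, then forced $f^{-1}$'s down the tower until re-entering $F_1$ --- is nothing other than the induced map $\mathcal G$ on $J'$ (by definition $\mathcal G=f^{-n}\circ g^{-1}$ with $n$ the first-entry time to $F_1$), and $\mathsf{Ee}$ makes $\mathcal G$ uniformly $\mu$-expanding on $W$ because $W$ is disjoint from $H_g$. This is precisely how the paper treats the case $J_k\subset W$: it applies $\mathcal G$ or $\mathcal F$ according to whether $J_k\subset\mathrm{int}(R_f\setminus R_g)$ or $J_k\subset\mathrm{int}(R_g\setminus R_f)$, so \emph{every} step of the itinerary expands lengths by at least $\mu>1$, and no bookkeeping about detours absorbing the gain is needed; since lengths are bounded, the contradiction is immediate. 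As written, your scheme admits steps ($g^{-1}$, tower descents) that you concede may contract, so the claimed unbounded growth --- the decisive point --- is not established.

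Two further items. First, you never address the fact that $\mathcal F,\mathcal G$ are only piecewise defined: their branch points $f(g^m(0))$ (resp.\ $g(f^m(1))$) belong to $K$, and if the tracked $K$-interval straddled one, its image would not be an interval and the expansion estimate would not apply to it as a whole. This can be excluded with your own tools (the image of the piece on one side of a branch point is a nondegenerate $K$-interval abutting $g(0)$, hence inside $\mathrm{int}(R_f)$; pulling it back by $g^{-1}$ and then by forced $f^{-1}$'s yields a $K$-interval of the form $[0,e)$ which, under further forced pull-backs, must eventually straddle $g(0)$, contradicting your no-straddling step), but some such argument has to be supplied, either by ruling out straddling or by stopping the induction there. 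Second, a small misstatement: ``$K$ meets at most one of $\mathrm{int}(R_f),\mathrm{int}(R_g)$'' is not what your dichotomy gives (nor what you need); it gives $K\cap\mathrm{int}(R_f)\cap\mathrm{int}(R_g)=\emptyset$, which together with connectedness of $J$ does yield $J\subset\mathrm{int}(R_f)$ or $J\subset\mathrm{int}(R_g)$.
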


Later we also prove the following:
\begin{prop}\label{p.sonzai}
In $\mathcal{A} \subset (\mathrm{Diff}_{\mathrm{im}}^1(I))^2$,
the set  $\mathcal{C}$ has non-empty interior 
(with respect to the relative topology on $\mathcal{A}$ induced by 
the $C^1$-topology on $(\mathrm{Diff}_{\mathrm{im}}^1(I) )^2$).
\end{prop}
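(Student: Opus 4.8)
The plan is to prove Proposition~\ref{p.sonzai} by first exhibiting a single pair $(f_0, g_0) \in \mathcal{C}$ and then arguing that each of the four defining conditions is open in a neighborhood of it, so that a whole $C^1$-neighborhood of $(f_0, g_0)$ lies in $\mathcal{C}$. The excerpt has already done most of the openness work for me: it records that $\mathsf{So}$ is $C^0$-open in $\mathcal{A}$, that $\mathsf{Ca}$ is $C^0$-open (using Remark~\ref{r.hairu} to reduce the infinitely many interval conditions to finitely many), and that $\mathsf{Ee}$ is $C^1$-open \emph{provided} $f'(0), g'(1) < 1$. So the two tasks that genuinely require attention are (i) constructing an explicit base point $(f_0,g_0)$ satisfying all four properties simultaneously, and (ii) verifying that $\mathsf{Ho}$ is open and that the membership in $\mathcal{A}$ itself is $C^1$-open.

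For the base point I would take piecewise-affine or simple smooth model maps and tune the slopes and the overlap so that all conditions hold with room to spare. Concretely, I would choose $f_0$ with $f_0(0)=0$ and a small derivative at $0$ (so $f_0'(0)<1$), $g_0$ with $g_0(1)=1$ and $g_0'(1)<1$, and arrange the graphs so that $f_0(x)<x<g_0(x)$ on $(0,1)$ and $0<g_0(0)<f_0(1)<1$; this places $(f_0,g_0)$ in $\mathcal{A}$. I would then shrink the overlap $W=[g_0(0),f_0(1)]$ enough that $f_0^2(1)<g_0(0)$ and $f_0(1)<g_0^2(0)$, giving $\mathsf{So}$ strictly. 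For $\mathsf{Ho}$ I must produce closed intervals $H_f\subset\mathrm{int}(F_1\setminus W)$ and $H_g\subset\mathrm{int}(G_1\setminus W)$ with nonempty interior and $g_0(H_f)=H_g$, $f_0(H_g)=H_f$; because $F_1\setminus W$ and $G_1\setminus W$ have nonempty interior under $\mathsf{So}$, I can pick a small interval $H_f$ deep inside $\mathrm{int}(F_1\setminus W)$ and \emph{define} $H_g:=g_0(H_f)$, then check it lands inside $\mathrm{int}(G_1\setminus W)$ and satisfies $f_0(H_g)=H_f$ --- this last compatibility is the one constraint that ties $f_0$ and $g_0$ together, and I would arrange it by a symmetry (e.g.\ $g_0 = \iota\circ f_0^{-1}\circ\iota$ for the flip $\iota(x)=1-x$) that makes the $f_0$--$g_0$ dynamics conjugate. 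With $\mathsf{Ee}$, having set $f_0'(0),g_0'(1)<1$, I would check that the induced maps $\mathcal{F},\mathcal{G}$ are uniformly expanding off the holes, which for affine-type models reduces to an explicit lower bound on finitely many branch slopes. Finally, for $\mathsf{Ca}$ I use Remark~\ref{r.hairu}: it suffices to see $g_0(0)\in\mathrm{int}(R_{f_0})$ and $f_0(1)\in\mathrm{int}(R_{g_0})$ together with the finitely many interval conditions covering all of $W$, which I tune by making the overlap $W$ short relative to the ruination intervals near $f_0(1)$ and $g_0(0)$.

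The openness step then proceeds condition by condition. Membership in $\mathcal{A}$ is $C^1$-open: the defining inequalities $f(0)=0$, $g(1)=1$ are preserved within the relevant function space, and $f(x)<x<g(x)$ together with $0<g(0)<f(1)<1$ are open conditions on a compact interval that persist under $C^0$-small (hence $C^1$-small) perturbation, while the diffeomorphism-on-image property is $C^1$-open. For $\mathsf{So}$ and $\mathsf{Ca}$ I invoke the $C^0$-openness already stated in the excerpt. For $\mathsf{Ee}$ I invoke the stated $C^1$-openness, whose hypothesis $f'(0),g'(1)<1$ holds at my base point by construction and persists under $C^1$-small perturbation. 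The remaining point is the openness of $\mathsf{Ho}$: a perturbation moves $F_1$, $G_1$, $W$, and the maps only slightly, and since I chose $H_f$ strictly interior with a definite gap to the boundary of $\mathrm{int}(F_1\setminus W)$, the nearby perturbed images $g(H_f)$ and $f^{-1}(H_f)$ remain close; the existence of compatible holes $H_f, H_g$ for the perturbed pair then follows from an implicit-function or fixed-point argument on the compatibility relation $f(g(H_f))=H_f$.

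The main obstacle I expect is $\mathsf{Ho}$, specifically the simultaneous realization and openness of the two compatibility equations $g(H_f)=H_g$ and $f(H_g)=H_f$. Unlike the other conditions, these are not mere inequalities but equalities linking the two maps, so I cannot simply perturb freely: for a perturbed pair $(f,g)$ near $(f_0,g_0)$ I must \emph{produce} a new pair of holes rather than reuse the old ones. The cleanest route is to observe that the pair of holes is determined by a periodic-type relation --- a hole $H_f$ satisfying $f(g(H_f)) = H_f$ corresponds to an invariant interval of the composition $f\circ g$ --- and to build $H_f$ around a hyperbolic fixed point (or a suitable invariant interval) of $f_0\circ g_0$ lying strictly inside $\mathrm{int}(F_1\setminus W)$. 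Hyperbolicity of that fixed point, guaranteed by the contraction at the endpoints, makes the invariant interval persist and vary continuously under $C^1$-perturbation, which simultaneously gives existence and openness of $\mathsf{Ho}$. Verifying that this persistent invariant interval stays inside the (also perturbing) region $\mathrm{int}(F_1\setminus W)$ is the technical heart of the argument, and everything else reduces to collecting the openness statements already available in the excerpt.
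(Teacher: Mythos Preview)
Your overall skeleton---build one explicit pair in $\mathcal{C}$ and then quote the openness of each property---matches the paper's, and your treatment of $\mathsf{Ho}$ via an invariant interval of $f\circ g$ is essentially what the paper does in Proposition~\ref{p.ana}. But you have misidentified the hard step: the genuine obstacle is not $\mathsf{Ho}$ but the simultaneous realization of $\mathsf{Ca}$ and $\mathsf{Ee}$, and your plan for $\mathsf{Ca}$ does not work as stated.

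The sentence ``tune by making the overlap $W$ short relative to the ruination intervals near $f_0(1)$ and $g_0(0)$'' hides the actual difficulty. The set $R_f\cap W$ is a countable union of intervals accumulating to $f(1)$, separated by gaps (the $\mathcal{F}$-preimages of $G_1\setminus H_g$), and symmetrically for $R_g$ near $g(0)$. For $\mathsf{Ca}$ you need each gap of $R_f$ inside $W$ to be filled by a component of $R_g$, and vice versa. Shrinking $W$ does not make this happen: the gap/interval pattern of $R_f$ in $W$ and that of $R_g$ in $W$ are governed by two independent return dynamics, and there is no reason their interlacing should cover $W$. The paper handles this by (i) working in a one-parameter family $(f_\varepsilon,g_\varepsilon)$ built so that the ruination patterns in $W^{\varepsilon}$ are affinely self-similar along a carefully chosen sequence $\varepsilon=\alpha_n\searrow 0$, and then (ii) composing with an extra diffeomorphism $\gamma$ of $W$ chosen specifically to slide $R_g$ so that $W\subset \mathrm{int}(R_f)\cup\mathrm{int}(\gamma(R_g))$. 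This $\gamma$-modification is not a small perturbation, and it threatens $\mathsf{Ee}$: the paper recovers $\mathsf{Ee}$ only by taking $n$ large, so that the extra return iterates absorb the bounded distortion introduced by $\gamma$. None of this mechanism is present in your plan, and without it there is no reason to believe a base point in $\mathcal{C}$ exists.

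A minor side remark: the symmetry you propose, $g_0=\iota\circ f_0^{-1}\circ\iota$, is not the right one (for $f_0(x)=x/2$ it does not even give a self-map of $I$). The symmetry used throughout the construction is $g(x)=1-f(1-x)$.
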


As a corollary, we see that our example can be taken $C^{\infty}$ or $C^{\omega}$.

\begin{rem}\label{r.open}
For $(f, g) \in \mathcal{A}$ satisfying the condition $\mathsf{So}$ and $\mathsf{Ho}$,
we know that the action of $\langle f, g\rangle_{+}$ is not minimal. However, 
in general the author does not know if the minimal set is homeomorphic to the Cantor set or not.
\end{rem}

\section{Proof of the Theorem}

In this section, we give the proof  of our Theorem. 
We fix $(f, g) \in \mathcal{C}$ and consider its unique minimal set $K$.

\subsection{Characterisation of the Cantor set}\label{ss.setumei}
We first remember the famous characterization of the 
Cantor set: If a topological space is a compact, metrizable, perfect and totally disconnected,
then it is homeomorphic to the Cantor set (see for example \cite{W}). 
In our setting, it is clear that $K$ satisfies the first and the second properties. 
The perfectness of the minimal set can be seen easily by definition of the minimal set.
Thus we only need to prove the totally disconnectedness of $K$. 
For that, since we work on one dimensional setting,
we only need to prove the emptiness of the interior.
We state it in more precise way.
\begin{rem}\label{r.reduce} 
For the proof of the Theorem, we only need to prove the following:
For every $x \in K$ and every non-empty open interval $J \subset I$ containing $x$, 
there exists a non-empty interval $L \subset J$ such that $L \cap K = \emptyset$.
\end{rem}

\subsection{Taking the backward image of the orbit}
First, we start from some simple observations.
\begin{lem}\label{l.aruyo}
Let $J \subset I$ be a non-empty open interval.
\begin{itemize}
\item If $J \subset \mathrm{int} (F_1 \setminus W)$ 
(resp. $J \subset \mathrm{int} (G_1 \setminus W)$)
and $J \cap K \neq \emptyset$,
then $\mathcal{F}(J) \cap K \neq \emptyset$ (resp. $\mathcal{G}(J) \cap K \neq \emptyset$).
\item Suppose $J \subset \mathrm{int}(W)$
and $J \cap K \neq \emptyset$.
Then, if $J \subset \mathrm{int}(R_f \setminus R_g)$ (resp. $J \subset \mathrm{int}(R_g \setminus R_f)$),
we have $\mathcal{G}(J) \cap K \neq \emptyset$ (resp. $\mathcal{F}(J) \cap K\neq \emptyset$).
\end{itemize}
\end{lem}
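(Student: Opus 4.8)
The plan is to prove the two bullets separately, both exploiting the fact that $K$ is backward-invariant in a suitable sense. The starting point is Proposition~1: $K = \overline{\mathcal{O}_{+}(0)}$, so a point of $K$ in an open interval $J$ is a limit of points $\phi(0)$ with $\phi \in \langle f,g\rangle_{+}$, and for $n$ large enough such a point actually lies in $J$. Fix $y \in J \cap \mathcal{O}_{+}(0)$, say $y = \phi(0)$ with $\phi$ a word in $f,g$. The key structural observation I would isolate first is: if $y$ lies in the interior of $F_1\setminus W$, then $y \notin g(I)$, so the last letter applied in $\phi$ must be $f$; writing $\phi = f\circ\psi$ we get $f^{-1}(y) = \psi(0) \in \mathcal{O}_{+}(0)$. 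If $f^{-1}(y) \in G_1$ we are done with $n(y)=0$; otherwise $f^{-1}(y) \notin G_1$ and (since $(f,g)\in\mathcal{A}$ and $\mathsf{So}$ holds) one checks $f^{-1}(y)\notin f(I)$, forcing the last letter of $\psi$ to be $g$, so $g^{-1}f^{-1}(y) \in \mathcal{O}_{+}(0)$; iterate until the point lands in $G_1$. This is exactly the definition of $\mathcal{F}$, so $\mathcal{F}(y) = g^{-n(y)}f^{-1}(y) \in \mathcal{O}_{+}(0) \subset K$. Since $y$ was an arbitrary point of $J\cap\mathcal{O}_+(0)$ and $\mathcal{F}$ is continuous on its domain, $\mathcal{F}(J)\cap K \neq\emptyset$. (Strictly, one should check that the points $y$ close to the accumulation point all fall in the same branch of $\mathcal{F}$, i.e.\ that $n(\cdot)$ is locally constant; this holds because the branches of $\mathcal{F}$ are open intervals and $J\subset\mathrm{int}(F_1\setminus W)$ meets only finitely many of them, so one passes to a subinterval if necessary — but by Remark~\ref{r.reduce} we are always free to shrink $J$.) The resp.\ statement for $G_1\setminus W$ and $\mathcal{G}$ is identical with the roles of $f$ and $g$ interchanged.

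For the second bullet, $J\subset\mathrm{int}(W)$ with $J\cap K\neq\emptyset$, and I would argue dually using $K=\overline{\mathcal{O}_{+}(1)}$ in one case and $K=\overline{\mathcal{O}_{+}(0)}$ in the other. Suppose $J\subset\mathrm{int}(R_f\setminus R_g)$. Recall $R_f = \mathcal{F}^{-1}(H_g)$, so $\mathcal{F}(J)$ is (a subset of) $H_g$; and $H_g\subset\mathrm{int}(G_1\setminus W)$ and $\mathrm{int}(H_g)\cap g(I)=\emptyset$ by $\mathsf{Ho}$. Take $y\in J\cap\mathcal{O}_{+}(1)$ (legitimate since $K = \overline{\mathcal{O}_+(1)}$ and $J$ is open). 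Now $y\in W\subset f(I)$, so the word representing $y$ ends in... this is where one has to be careful, because $y\in W$ also lies in $g(I)$, so the last letter could be either. The clean way is: by Proposition~\ref{p.tenganai}, $K$ avoids $\mathrm{int}(H_f\cup H_g)$, while $\mathcal{F}(y)\in\mathrm{int}(H_g)$ would contradict $\mathcal{F}(y)\in K$ — so I instead run the \emph{forward} construction of $\mathcal{G}$ on $y$. Since $y \notin R_g$, $\mathcal{G}(y)\notin H_f$; and since $y\notin g(I)$ is false, I need the precise reason the $g$-preimage is the relevant one. Here I would use: $y\in\mathrm{int}(R_f)$ and $R_f$ accumulates only at $f(1)$, so $y$ is bounded away from $f(1)$, hence by $\mathsf{So}$ (which gives $f^2(1)<g(0)$, i.e.\ $f(W)\subset\mathrm{int}(F_1)$ disjoint from a neighborhood of... ) one checks $f^{-1}(y)\notin G_1$ is impossible in a way that... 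Let me restate the real argument: since $y=\phi(1)$ and $1\notin\mathrm{int}(W)$, there is a previous point $z\in\mathcal{O}_+(1)$ with $y\in\{f(z),g(z)\}$; if $y=g(z)$ then $z = g^{-1}(y)$ and we chase backward through $\mathcal{G}$ landing in $F_1$ at a point of $\mathcal{O}_+(1)\subset K$, which is $\mathcal{G}(y)$; if $y=f(z)$ then $z=f^{-1}(y)\notin H_g$... and one shows this second option leads back to the first after finitely many $f$-steps or else to a point already outside $W$ — the bookkeeping here is the crux. The condition $y\in R_f\setminus R_g$ is precisely engineered so that, whichever preimage we take, the induced backward orbit of $y$ under $(f^{-1},g^{-1})$ first exits $W$ through the $\mathcal{G}$-branch, i.e.\ lands in $F_1$, giving $\mathcal{G}(y)\in\mathcal{O}_+(1)\cap F_1\subset K$.

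The main obstacle, and the step I expect to require the most care, is precisely this second bullet: in the overlap $W$ a point has two possible immediate preimages, so unlike the first bullet the "last letter" of the word is not determined by location alone. The whole point of introducing the ruination regions $R_f,R_g$ and the castration property $\mathsf{Ca}$ is to control this ambiguity, and the proof of Lemma~\ref{l.aruyo} must make that control explicit — showing that membership of $J$ in $\mathrm{int}(R_f\setminus R_g)$ forces the backward $(f^{-1},g^{-1})$-orbit of any $y\in J\cap K$ to leave $W$ on the side of $F_1$, so that the relevant induced map is $\mathcal{G}$ and not $\mathcal{F}$. Once this is pinned down, continuity of $\mathcal{G}$ (after shrinking $J$ to a single branch, permissible by Remark~\ref{r.reduce}) and $\mathcal{G}(y)\in\mathcal{O}_+(1)\subset K$ finish the argument; the symmetric case $J\subset\mathrm{int}(R_g\setminus R_f)$ is obtained by exchanging $f\leftrightarrow g$ and $0\leftrightarrow 1$.
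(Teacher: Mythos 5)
Your first bullet is essentially the paper's argument: pick a single point $p\in J\cap\mathcal{O}_{+}(0)$ (possible since $K=\overline{\mathcal{O}_{+}(0)}$ and $J$ is open), note that $p\notin g(I)$ forces the last letter to be $f$, and then $\mathsf{So}$ forces every subsequent backward step to be $g^{-1}$ until the point lands in $G_1$, so $\mathcal{F}(p)\in\mathcal{O}_{+}(0)\subset K$ and $\mathcal{F}(p)\in\mathcal{F}(J)$. Your added worry about $n(\cdot)$ being locally constant and about shrinking $J$ is unnecessary (one point suffices, no continuity of $\mathcal{F}$ on all of $J$ is used), and the appeal to Remark~\ref{r.reduce} to shrink $J$ inside the lemma would anyway need the smaller interval to still meet $K$, which is not automatic; but this is moot.

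The second bullet, however, has a genuine gap, and it is exactly at the step you yourself flag as ``the crux.'' You glimpse the correct mechanism (``the clean way''): for $p\in J\cap\mathcal{O}_{+}(0)$ with $p\in\mathrm{int}(W)$, the same induction as in the first bullet shows that at least one of $\mathcal{F}(p)$, $\mathcal{G}(p)$ lies in $\mathcal{O}_{+}(0)$ (the first backward letter may be $f$ or $g$ since $p\in W$, but after that $\mathsf{So}$ forces $g^{-1}$'s, resp.\ $f^{-1}$'s, until the point reaches $G_1$, resp.\ $F_1$); since $p\in R_f=\mathcal{F}^{-1}(H_g)$ we have $\mathcal{F}(p)\in H_g$ (interior, as $p\in\mathrm{int}(R_f)$), and Proposition~\ref{p.tenganai} gives $K\cap\mathrm{int}(H_g)=\emptyset$, so the option $\mathcal{F}(p)\in\mathcal{O}_{+}(0)$ is impossible; hence $\mathcal{G}(p)\in\mathcal{O}_{+}(0)\subset K$ and $\mathcal{G}(J)\cap K\neq\emptyset$. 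Instead of carrying this out, you abandon it, switch (needlessly) to $\mathcal{O}_{+}(1)$, get stuck on which letter the word ends in, and conclude with the unproved assertion that ``whichever preimage we take'' the backward orbit exits $W$ through the $\mathcal{G}$-branch and lands in $F_1$. That assertion is false as stated: if the first backward letter is $f$, the induced exit is $\mathcal{F}(p)$, which lies in $G_1$ (indeed in $H_g$). What $J\subset\mathrm{int}(R_f\setminus R_g)$ buys is not that this branch is absent, but that it cannot be realized by orbit points, because it would place a point of $\mathcal{O}_{+}(0)$ inside the hole --- this is precisely the use of Proposition~\ref{p.tenganai} that your write-up never pins down. (Two minor slips along the way: $\mathrm{int}(H_g)$ is disjoint from $f(I)$, not from $g(I)$; and there is no need for a ``forward construction'' of $\mathcal{G}$ --- the whole argument runs on backward images of a single orbit point, as in the first bullet.)
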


\begin{proof}
We start from the proof of the first item.
By the symmetry, we only consider the case 
$J \subset \mathrm{int} (F_1 \setminus W)$
(and omit the case 
$J \subset \mathrm{int} (G_1 \setminus W)$). 

One important observation for the proof is the following: 
If $p \in F_1 \setminus W$ belongs to $\mathcal{O}_{+}(0)$,
then $\mathcal{F}(p) \in \mathcal{O}_{+}(0)$. Indeed,  in general, for given $p \in \mathcal{O}_{+}(0)$
we have either $f^{-1}(p) \in \mathcal{O}_{+}(0)$ or $g^{-1}(p) \in \mathcal{O}_{+}(0)$ 
(both condition may hold simultaneously).
Since $p \in F_1 \setminus W$, the second option cannot hold. 
So we see that $f^{-1}(p) \in \mathcal{O}_{+}(0)$.
Then, fix $k \geq 1$ so that $f^{-1}(p) \in G_k$ holds. 
By repeating the similar reasoning, 
we can prove that $(g^{-j+1}\circ f^{-1})(p) \in \mathcal{O}_{+}(0)$ for every $j$ with $1 \leq j \leq k$
(by the induction of $j$).
In particular, we have $(g^{-k+1}\circ f^{-1})(p) \in \mathcal{O}_{+}(0)$,
which is equal to $\mathcal{F}(p)$ by definition. 
Thus we see that $\mathcal{F}(J)$ contains a point 
$\mathcal{F}(p) \in \mathcal{O}_{+}(0) \subset K$, which implies $\mathcal{F}(J) \cap K \neq \emptyset$. 

Let us consider the proof of the second item.
Again by the symmetry between $f$ and $g$, we only treat the case 
where $J \subset \mathrm{int}(R_f \setminus R_g)$ and omit the case
$J \subset \mathrm{int}(R_g \setminus R_f)$.
Suppose $J \subset \mathrm{int}(R_f \setminus R_g)$ and 
there exists a point $p \in J \cap \mathcal{O}_{+}(0)$.
Then, as is the previous discussion, one of the following options holds: 
$\mathcal{F}(p)  \in \mathcal{O}_{+}(0)$ or $\mathcal{G}(p)\in \mathcal{O}_{+}(0)$. 
However, the condition $p \in R_f$ implies $\mathcal{F}(p) \in \mathrm{int}(H_f)$ 
and this, together with Proposition~\ref{p.tenganai} prohibits the first option.
Thus we have $\mathcal{G}(p)  \in \mathcal{O}_{+}(0)$,
which implies what we claimed. 
\end{proof}

In the similar way, we have the following (we omit the proof of it). 
\begin{lem}\label{l.naiyo}
If $J \subset \mathrm{int}(R_f \cap R_g)$, then $J \cap K = \emptyset$.
\end{lem}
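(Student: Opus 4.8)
The plan is to follow the proof of Lemma~\ref{l.aruyo} and argue by contradiction. First I would record the inclusion that makes the statement meaningful: since $R_f \subset F_1$ and $R_g \subset G_1$, and since $\mathsf{So}$ forces $F_1 \cap G_1 = W$, we get $R_f \cap R_g \subset W$, so that $J \subset \mathrm{int}(R_f \cap R_g) \subset \mathrm{int}(W)$. Now suppose for contradiction that $J \cap K \neq \emptyset$. Because $J$ is open and $K = \overline{\mathcal{O}_{+}(0)}$, this produces a point $p \in J \cap \mathcal{O}_{+}(0)$, and $p \in \mathrm{int}(W)$ forces $p \neq 0$ since $0 \notin W$. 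Thus the whole problem is reduced to exhibiting a contradiction from such a $p$.

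Next I would invoke the backward-image dichotomy used in the proof of Lemma~\ref{l.aruyo}. As $p \in \mathcal{O}_{+}(0)$ and $p \neq 0$, one of $f^{-1}(p)$, $g^{-1}(p)$ again lies in $\mathcal{O}_{+}(0)$; in the first case applying $g^{-1}$ repeatedly until the orbit re-enters $G_1$, and in the second applying $f^{-1}$ repeatedly until it re-enters $F_1$ — exactly the iteration carried out there — promotes this to the statement that $\mathcal{F}(p) \in \mathcal{O}_{+}(0)$ or $\mathcal{G}(p) \in \mathcal{O}_{+}(0)$. The decisive point, and the only place where the hypothesis $J \subset \mathrm{int}(R_f \cap R_g)$ (rather than $R_f \setminus R_g$ as in Lemma~\ref{l.aruyo}) is used, is that here both options are excluded simultaneously: $p \in \mathrm{int}(R_f)$ yields $\mathcal{F}(p) \in \mathrm{int}(H_g)$ and $p \in \mathrm{int}(R_g)$ yields $\mathcal{G}(p) \in \mathrm{int}(H_f)$, while Proposition~\ref{p.tenganai} gives $K \cap \mathrm{int}(H_f \cup H_g) = \emptyset$. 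Since $\mathcal{O}_{+}(0) \subset K$, neither $\mathcal{F}(p)$ nor $\mathcal{G}(p)$ can belong to $\mathcal{O}_{+}(0)$, contradicting the dichotomy. Hence no such $p$ exists and $J \cap K = \emptyset$.

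The part requiring the most care — and the main obstacle, such as it is — is the passage from $p \in \mathrm{int}(R_f)$ to $\mathcal{F}(p) \in \mathrm{int}(H_g)$: the definition $R_f = \mathcal{F}^{-1}(H_g)$ only guarantees $\mathcal{F}(p) \in H_g$, so one must rule out that an interior point of $R_f$ maps to an endpoint of $H_g$. I would settle this by the local invertibility of the induced map: on each continuity branch $\mathcal{F}$ is a $C^1$ diffeomorphism, hence an open map, so it carries interior points of $R_f$ to interior points of $H_g$, and symmetrically $\mathcal{G}$ carries $\mathrm{int}(R_g)$ into $\mathrm{int}(H_f)$. Granting this bookkeeping, the contradiction of the previous paragraph closes the argument.
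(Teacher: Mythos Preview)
Your argument is correct and is exactly the approach the paper intends: the paper omits the proof of Lemma~\ref{l.naiyo}, saying only that it goes ``in the similar way'' as Lemma~\ref{l.aruyo}, and your write-up carries out precisely that parallel, including the care about interior points that the paper itself glosses over.
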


We prepare one definition.
For $(f, g) \in \mathcal{C}$, we define 
\[
B_f := \partial (H_f \cup (R_f \cap R_g)) \cup \partial F_1, \quad 
B_g := \partial (H_g \cup (R_f \cap R_g)) \cup \partial G_1,
\]
where $\partial X$ denotes the boundary of $X$.
Then we have the following.
\begin{lem}\label{l.sukima}
If $J \subset I$ is an open interval with $J \cap B_f \neq \emptyset$ (resp. $J \cap B_g \neq \emptyset$), then there exists 
a non-empty open interval $U \subset J$ with $U \cap K \neq \emptyset$. 
In other words, 
$B_f \subset K$
(resp. $B_g \subset K$).
\end{lem}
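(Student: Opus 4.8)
The plan is to establish the claimed inclusion $B_f \subset K$ (and symmetrically $B_g \subset K$) by showing that any open interval meeting one of these boundary points must meet $K$, which suffices since $K$ is closed. I would split $B_f$ into its pieces: the endpoints $\partial F_1 = \{f^2(1), f(1)\}$, the boundary points of $H_f$, and the boundary points of $R_f \cap R_g$. For the endpoints of $F_1$: recall $f^n(1), f^{n+1}(1) \in \mathcal{O}_+(1) \subset K$ by Proposition~1, so these points are literally in $K$; any interval around them meets $K$. This is the easy case.

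For the boundary of $H_f$: the key observation is that $H_f$ was defined together with the relation $f(H_g) = H_f$ and $g(H_f) = H_g$, and the inducing maps $\mathcal{F}, \mathcal{G}$ act as local $C^1$-homeomorphisms near these boundary points (they are interior to $F_1 \setminus W$ and $G_1 \setminus W$ respectively by $\mathsf{Ho}$). So if $J$ is a small open interval containing an endpoint of $H_f$, I would apply $\mathcal{F}$ (or rather the relevant branch of $f^{-1}$ or $g^{-1}$) repeatedly, tracking the image of $\partial H_f$, and use the first item of Lemma~\ref{l.aruyo} in reverse: since $\mathcal{F}$ pushes endpoints of $H_f$ to endpoints of $H_g$, iterating eventually lands near $\partial W$, and from there — using that $g(0) \in \operatorname{int}(R_f)$ and $f(1) \in \operatorname{int}(R_g)$ by Remark~\ref{r.hairu} — one connects to $\mathcal{O}_+(0)$ or $\mathcal{O}_+(1)$. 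More directly: $\partial H_f \subset \mathcal{O}_+(0) \cup \mathcal{O}_+(1)$ because the backward orbit structure forces the endpoints of the hole to be images of $0$ or $1$; indeed $f(1)$ and $g(0)$ are accumulation points of $R_f$ resp.\ $R_g$, and by the functional relations defining $H_f, H_g$ the endpoints are obtained from $\{0,1\}$ by applying finitely many branches of $f,g$. I expect one must be slightly careful to verify this last claim cleanly rather than hand-wave it.

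For the boundary of $R_f \cap R_g$: here I would use the openness structure. Write $J$ for an open interval meeting $\partial(R_f \cap R_g)$. If $J$ is small, then $J$ contains points of $\operatorname{int}(R_f \cap R_g)$ on one side — by Lemma~\ref{l.naiyo} those are disjoint from $K$, which is not directly helpful — but $J$ also contains points on the other side lying in (say) $\operatorname{int}(R_f \setminus R_g)$ or in $\operatorname{int}(R_g \setminus R_f)$ or in the complement of $W$; applying $\mathcal{G}$ or $\mathcal{F}$ via the second item of Lemma~\ref{l.aruyo} and continuing, one either shows $J \cap K \neq \emptyset$ or else traces the structure down to an endpoint already handled. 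The cleaner route: the endpoints of a connected component of $R_f \cap R_g$ are preimages under $\mathcal{F}$ or $\mathcal{G}$ of endpoints of $H_g$ or $H_f$, hence again backward-orbit points of $0$ or $1$, hence in $K$.

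The main obstacle I anticipate is the bookkeeping of exactly which backward branch ($f^{-1}$ versus some $g^{-n} \circ f^{-1}$) carries a given boundary point to the next, and verifying that the chain of such operations genuinely terminates at $0$ or $1$ — i.e.\ that $\partial H_f \cup \partial H_g \cup \partial(R_f \cap R_g) \subset \mathcal{O}_+(0) \cup \mathcal{O}_+(1)$. Once that set-membership is pinned down, Proposition~1 closes everything immediately, since $K = \overline{\mathcal{O}_+(0)} = \overline{\mathcal{O}_+(1)}$ contains both orbits. I would therefore organize the proof as: (i) handle $\partial F_1 \subset \mathcal{O}_+(1)$; (ii) show $\partial H_f, \partial H_g$ lie in $\mathcal{O}_+(0) \cup \mathcal{O}_+(1)$ using the defining relations $f(H_g) = H_f$, $g(H_f) = H_g$ together with Remark~\ref{r.hairu}; (iii) deduce $\partial(R_f \cap R_g) \subset \mathcal{O}_+(0) \cup \mathcal{O}_+(1)$ since $\mathcal{F}, \mathcal{G}$ restricted to $W$ map these components onto $H_g$, $H_f$ and are compositions of branches of $f, g$; (iv) conclude $B_f, B_g \subset \overline{\mathcal{O}_+(0)} = K$.
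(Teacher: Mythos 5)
Your proposal proves the wrong statement, because the lemma as printed is misstated and you have taken the misprint at face value. Look at how Lemma~\ref{l.sukima} is actually used in the induction of Section~\ref{ss.proof}: when $J_k$ meets $B_f\cup B_g$ the construction stops, takes ``the interval $U$ in the conclusion of Lemma~\ref{l.sukima}'', and later asserts that this $U$ satisfies $U\cap K=\emptyset$; it is then pulled back by $\tau$ to produce the gap $L\subset J$ required by Remark~\ref{r.reduce}. Consistently, the paper's own proof of the lemma produces in each of its four cases ($J\cap H_f\neq\emptyset$; $J\cap\partial(R_f\cap R_g)\neq\emptyset$; $f(1)\in J$; $f^2(1)\in J$) an open $U\subset J$ \emph{disjoint} from $K$: via Proposition~\ref{p.tenganai} for the hole, via Lemma~\ref{l.naiyo} plus the castration property for $R_f\cap R_g$, via Remark~\ref{r.hairu} and the accumulation of $R_f$ at $f(1)$ for the third case, and by pulling back with $f$ for the fourth. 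So the intended conclusion is $U\cap K=\emptyset$ (points of $B_f$ are accumulated by gaps of $K$), not $U\cap K\neq\emptyset$; establishing $B_f\subset K$ contributes nothing to the proof of the Theorem, since what the induction needs at these points is precisely a gap.

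Even judged on its own terms, your argument has a genuine gap at step (ii), exactly where you flagged discomfort. The endpoints of $H_f$ are fixed points of $f\circ g$ (since $f(g(H_f))=H_f$ and both maps preserve orientation), and nothing in the axioms $\mathsf{So}$, $\mathsf{Ho}$, $\mathsf{Ee}$, $\mathsf{Ca}$ forces such fixed points to lie on $\mathcal{O}_{+}(0)\cup\mathcal{O}_{+}(1)$; in the explicit construction of Section~4 they arise as $\bigcap_{n\geq 0}(f_0\circ g_0)^n(J_p)$, i.e.\ as attracting fixed points of $f_0\circ g_0$, not as images of $0$ or $1$. Consequently step (iii) inherits the same problem, since the components of $R_f\cap R_g$ are $\mathcal{F}$- or $\mathcal{G}$-preimages of $H_g$, $H_f$, so their endpoints are preimages of $\partial H_g$, $\partial H_f$ rather than of orbit points. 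Whether $\partial H_f\subset K$ holds at all is unclear (it would say the gap of $K$ containing $\mathrm{int}(H_f)$ is exactly $\mathrm{int}(H_f)$, which the hypotheses do not guarantee), and, as explained above, it is in any case not the fact the paper needs. Only your case $\partial F_1=\{f^2(1),f(1)\}\subset\mathcal{O}_{+}(1)\subset K$ is correct as stated.
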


\begin{proof}
Take $J \subset I$ with $J \cap B_f \neq \emptyset$. 
We divide our situation into four cases:  
$J \cap H_f \neq \emptyset$,
$J \cap \partial (R_f \cap R_g) \neq \emptyset$, $f(1) \in J$ or $f^2(1) \in J$.

For the first case, remember Proposition~\ref{p.tenganai}:
It implies that every non-empty open set $U \subset J \cap \mathrm{int}(H_f)$
is disjoint from $K$. Thus one of such $U$ gives us the desired set. 

In the second case, using the castration property 
(that is, $W \subset \mathrm{int}(R_f) \cup \mathrm{int}(R_g)$),
we see that there exists 
a non-empty open set $U \subset J \cap(R_f \cap R_g)$. 
By Lemma~\ref{l.naiyo}, this $U$ gives the desired set.
The third case is reduced to the second case by Remark~\ref{r.hairu} and the fact that 
$R_f$ accumulates to $f(1)$.
 
In the last case, consider $f^{-1}(J)$. Then it contains $f(1)$. 
Thus by the previous argument, 
we can take $U \subset f^{-1}(J)$ with $U \cap K = \emptyset$. Then,
by repeating the similar argument as Lemma~\ref{l.aruyo}, 
we see that $f(U) \subset J$ gives us the desired interval.

\end{proof}

\subsection{Induction: the proof of the Theorem}\label{ss.proof}

Now we start the proof of the Theorem.
Remember that to prove the Theorem we only need to
prove the statement in Remark~\ref{r.reduce}.
The idea of the proof is 
using Lemma~\ref{l.aruyo}, \ref{l.naiyo} and \ref{l.sukima}
inductively to examine the possible past behavior of $J$.

We start the proof for $J$ satisfying $ J \cap (F_1 \cup G_1) \neq \emptyset$.
The other case is easy to prove and will be discussed later.

\begin{proof}[Proof of the Theorem for $J$ with  $ J \cap (F_1 \cup G_1) \neq \emptyset$]

Given $J \subset I $ with $J \cap (F_1 \cup G_1) \neq \emptyset$,
we define a $C^1$-map $\tau: J \to I$ which is a diffeomorphism on the image
and a non-empty open interval $U \subset \tau (J)$.
After that we will see that $L := \tau^{-1}(U)$ gives us the claimed interval 
in Remark~\ref{r.reduce}.
For that, we define a finite sequence of maps $(\tau_n)$ and intervals $(J_n)$ inductively as follows.

We put $\tau_0 := \mathrm{id}$ and $J_0 := J$.
Suppose that we have defined $\tau_{k}$ and $J_{k}$  ($k$ is some non-negative integer). 
Then we proceed as follows: First, if $J_k \cap (B_f \cup B_g)\neq \emptyset$, then take 
the interval $U$ in the conclusion of Lemma~\ref{l.sukima}, put $\tau = \tau_k$ and finish the construction.
Suppose not. 
Then we have five possibilities: $J_k \subset H_f$, $J_k \subset H_g$,
$J_k \subset W$, $J_k \subset F_1 \setminus (H_f \cup W)$
or $J_k \subset G_1 \setminus (H_g \cup W)$.
We proceed as follows. 
\begin{itemize}
\item In the first (resp. second) case,
set $\tau= \tau_{k+1} := \mathcal{F} \circ \tau_k$ (resp. $\tau= \tau_{k+1} := \mathcal{G} \circ \tau_k$),
$U = J_{k+1} := \mathcal{F} (J_k)$, 
(resp. $U = J_{k+1} := \mathcal{G} (J_k)$) and finish the construction.
Note that by Proposition~\ref{p.tenganai}, we see that $U \cap K = \emptyset$. 
\item Suppose $J_k \subset W$. In this case, we know that $J$ is contained in either 
$\mathrm{int}(R_f\setminus R_g)$ or $\mathrm{int}(R_g \setminus R_f)$
(remember that we are under the assumption $J \cap (B_f \cup B_g) = \emptyset$).
\begin{itemize}
\item If  $J \subset \mathrm{int}(R_f\setminus R_g)$, then put $\tau_{k+1} := \mathcal{G} \circ \tau_k$, 
$J_{k+1} := \mathcal{G} (J_k)$ and continue the construction.
\item Otherwise (that is, if  $J \subset \mathrm{int}(R_g\setminus R_f)$), 
then put $\tau_{k+1} := \mathcal{F} \circ \tau_k$,
$J_{k+1} := \mathcal{F} (J_k)$ and continue the construction.
\end{itemize}
\item In the fourth (resp. fifth) case, 
set $\tau_{k+1} := \mathcal{F} \circ \tau_k$ (resp. $\tau_{k+1} := \mathcal{G} \circ \tau_k$),
$J_{k+1} := \mathcal{F} (J_k)$, 
(resp. $J_{k+1} := \mathcal{G} (J_k)$) and continue the construction. 
\end{itemize}

Note that, by eventual expansion property, this process finishes in finite 
steps. Thus for given $J$, we can take $\tau$ and $U \subset \tau(J)$.
Note that, by construction, $\tau|_{J}: J \to I$ is a diffeomorphism on its image and
$U \subset I$ satisfies $U \cap K =\emptyset$.

Then consider the non-empty open interval $\tau^{-1}(U) \subset J$. 
We claim that  $\tau^{-1}(U) \cap K = \emptyset$.
Indeed, suppose $\tau^{-1}(U) \cap K \neq \emptyset$.
This implies $\tau^{-1}(U) \cap \mathcal{O}_{+}(0) \neq \emptyset$. 
However, applying Lemma~\ref{l.aruyo} or \ref{l.naiyo} to each 
option of the construction of $\tau_1$, 
we see that $\tau_1(\tau^{-1}(U))$
contains a point of $\mathcal{O}_{+}(0)$. 
Then by induction 
we conclude that $\tau_j(\tau^{-1}(U)) \cap \mathcal{O}_{+}(0) \neq \emptyset$ for every $j$. 
In particular, we see that $\tau(\tau^{-1}(U)) = U$ contains some point of $K$,
which contradicts to the choice of $U$.
\end{proof}

We give the proof for the case where $J$ is outside $F_1 \cup G_1$,
which completes the proof of the Theorem.

\begin{proof}[Proof of Theorem for $J$ is not contained in $F_1 \cup G_1$]
In this case, we know that $J$ is contained in 
$f(I) \setminus F_1$ or $g(I) \setminus G$. 
By the symmetry, we only need to consider the first case. 

By shrinking $J$ if necessary, we can assume that there exists $N \geq 2$ such that $J \subset F_N$. 
Thus we have $f^{-N+1}(J) \subset F_1$.
Applying the conclusion of the previous case, 
we take a non-empty open set $U \subset f^{-N+1}(J) $ such that 
$U \cap K = \emptyset$. Then, consider $f^{N-1}(U) \subset J$. 
We claim that $f^{N-1}(U) \cap K = \emptyset$ (which implies what we want to prove). 

Indeed, if not, there exists a point $x \in f^{N-1}(U) \cap \mathcal{O}_{+}(0)$.
Then, repeating the argument used in the proof of Lemma~\ref{l.aruyo}, 
together with the single overlapping property $\mathsf{So}$,
we see that $f^{N-1-j }(U) \cap K \neq \emptyset$ for every $0 \leq j \leq N-1$
(by the induction of $j$).
In particular it implies that $U \cap K \neq \emptyset$, but this is a contradiction.
\end{proof}

%
%

\section{Construction of the example}

In this section, we prove Proposition~\ref{p.sonzai}, that is, 
we describe how we construct the example. 

\subsection{Start of the construction and first modification}

We start from the system
\[
f_{\ast}(x) = (1/2)x, \quad g_{\ast}(x) =(1/2)x +1/2.
\]
Then, put $p:=1/3$ and $q:=2/3$  and take  
intervals $J_p$, $J_q$ centered at $p$, $q$ respectively,
with sufficiently small diameter (for example, $|J_p| = |J_q| =1/(100)$ is enough).

We modify $f_{\ast}$ and $g_{\ast}$ to get the example.
Every modification is performed keeping the symmetry 
of the graph with respect to the diagonal: $1-f(1-x) = g(x)$
holds for every $x \in [0, 1]$.
First we modify $f_{\ast}$ and $g_{\ast}$ to $C^1$-maps $f_{0}$ and $g_{0}$ respectively, 
keeping them in $\mathrm{Diff}_{\mathrm{im}}^{1}$ so that 
the following holds:
\begin{itemize}
\item $f_0 =f_{\ast}$ outside $J_p$  and $g_0=g_{\ast}$ outside $J_q$.
\item There are intervals 
$J'_p \subset \mathrm{int}( J_p)$, $J'_q \subset \mathrm{int}( J_q)$ such that
$J'_q \subset \mathrm{int}(g_0(J'_p)) $ and 
$J'_p \subset \mathrm{int}(f_0(J'_q))$.
\end{itemize}
Such a modification can be done just by modifying local behavior around $p$ and $q$  
expanding. 

Then we have the following:
\begin{prop}\label{p.ana}
There exist intervals $H_p \subset \mathrm{int}(J_p)$ and 
$H_q \subset \mathrm{int}(J_q)$ such that
$J'_p \subset \mathrm{int}(H_p)$, 
$J'_q \subset \mathrm{int}(H_q)$ and  
$f_0(H_q) =H_p$,  $g_0(H_q) =H_p$.
\end{prop}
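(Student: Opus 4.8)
The plan is to obtain $H_p$ and $H_q$ as an invariant pair for the pull-back (round-trip) dynamics generated by the two covering conditions $J'_q \subset \mathrm{int}(g_0(J'_p))$ and $J'_p \subset \mathrm{int}(f_0(J'_q))$ produced by the first modification, and then to read the two required identities $f_0(H_q) = H_p$ and $g_0(H_q) = H_p$ off this pair together with the diagonal symmetry $1 - f_0(1-x) = g_0(x)$. I will keep the symmetric normalisation $H_p = 1 - H_q$ throughout, so that it is enough to pin down $H_q$ and to control the action of a single generator.

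First I would build $H_q$ as a round-trip invariant interval. Composing the two covering conditions gives $(g_0 \circ f_0)(J'_q) \supset g_0(J'_p) \supset J'_q$, both inclusions being strict on interiors; writing $\Phi := g_0 \circ f_0$, a $C^1$ diffeomorphism onto its image near $J'_q$, this says that $\Phi$ covers $J'_q$ strictly. Hence the monotone branch of $\Phi^{-1}$ sends $J'_q$ into $\mathrm{int}(J'_q)$, and the nested intervals $\Phi^{-n}(J'_q)$ shrink to a single fixed point $x^\ast \in \mathrm{int}(J'_q)$ that is repelling for $\Phi$. Outside the modification windows the generators coincide with $f_\ast, g_\ast$, so there $\Phi$ coincides with $g_\ast \circ f_\ast$, which contracts toward $q$ with rate $1/4$; consequently $\Phi(x) - x$ changes sign once on each side of $x^\ast$ inside $J_q$, producing two attracting fixed points $x^{-} < x^\ast < x^{+}$ of $\Phi$. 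I then set $H_q := [x^{-}, x^{+}]$. By construction $\Phi$ restricts to a diffeomorphism of $H_q$ onto itself, so $\Phi(H_q) = H_q$; moreover $H_q \subset \mathrm{int}(J_q)$, and the strictness of the covering inclusions gives $J'_q \subset \mathrm{int}(H_q)$.

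Setting $H_p := f_0(H_q)$ immediately yields $f_0(H_q) = H_p$, and the strict inclusion $J'_p \subset \mathrm{int}(f_0(J'_q))$ gives $H_p \subset \mathrm{int}(J_p)$ and $J'_p \subset \mathrm{int}(H_p)$. The remaining identity $g_0(H_q) = H_p$ is the step I expect to be the main obstacle, since it demands that $g_0$ reproduce on $H_q$ the image already produced by $f_0$. The information at hand is of the complementary form: unwinding $\Phi(H_q) = H_q$ gives $g_0(f_0(H_q)) = H_q$, i.e. $g_0(H_p) = H_q$, and the covering condition $g_0(J'_p) \supset J'_q$ controls $g_0$ on the $p$-side rather than on $H_q$. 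Converting this into the stated identity $g_0(H_q) = H_p$ is precisely where the diagonal symmetry $g_0(x) = 1 - f_0(1-x)$ and the normalisation $H_p = 1 - H_q$ must be combined, and where the quantitative strength of the modification (measured against the contraction rate $1/4$ of $g_\ast \circ f_\ast$ at $q$) has to be used to guarantee that both $H_q$ and $H_p = f_0(H_q)$ remain strictly inside the windows $J_q$ and $J_p$, so that every generator acts there through its intended branch. This compatibility of the two generators on the hole is the crux of the argument.
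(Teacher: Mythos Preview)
Your difficulty with the ``remaining identity'' $g_0(H_q)=H_p$ is not a gap in your argument but a typo in the statement: since $g_0(x)>x$ on $(0,1)$ and $H_q$ sits near $q=2/3$ while $H_p$ sits near $p=1/3$, the identity $g_0(H_q)=H_p$ is simply impossible. The intended pair of identities (matching the hole property $\mathsf{Ho}$, namely $f(H_g)=H_f$ and $g(H_f)=H_g$) is
\[
f_0(H_q)=H_p,\qquad g_0(H_p)=H_q,
\]
and you have already obtained both: from $\Phi(H_q)=H_q$ with $\Phi=g_0\circ f_0$ and $H_p:=f_0(H_q)$ you get $g_0(H_p)=H_q$ directly. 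So the ``crux'' you were worried about, and the appeal to the diagonal symmetry, are unnecessary.

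On the construction of $H_q$ itself, your route via a repelling fixed point $x^\ast$ flanked by two attracting fixed points $x^-<x^\ast<x^+$ works but is more delicate than needed, and the step ``$\Phi(x)-x$ changes sign once on each side'' would require further justification about how $\Phi$ behaves across the boundary of the modification window. The paper bypasses all of this: from $f_0=f_\ast$ off $J_p$ and $g_0=g_\ast$ off $J_q$ one checks in one line that $(f_0\circ g_0)(J_p)\subset\mathrm{int}(J_p)$, so
\[
H_p:=\bigcap_{n\ge 0}(f_0\circ g_0)^n(J_p)
\]
is a closed interval in $\mathrm{int}(J_p)$; it contains $J'_p$ because $(f_0\circ g_0)(J'_p)\supset J'_p$, and it satisfies $(f_0\circ g_0)(H_p)=H_p$ by construction. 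Setting $H_q:=g_0(H_p)$ then gives both identities at once. Your $\Phi$-invariant interval is the mirror of this nested intersection on the $q$-side; taking $\bigcap_n(g_0\circ f_0)^n(J_q)$ directly would give you $H_q$ without any fixed-point analysis.
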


\begin{proof}
By definition, we have
$f_0 \circ g_0 (J_p) \subset \mathrm{int} (J_p)$. 
So, $ H_{p} := \cap_{n \geq 0} (f_0 \circ g_0)^{n} (J_p)$ is a closed interval 
contained in $\mathrm{int}(J_p)$. 
We also know that it cannot be a point, since it contains $J'_p$.
Furthermore, by definition we can see that $ (f_0 \circ g_0)(H_p) = H_p$.
Then $H_q := g_0(H_p)$ and $H_p$ are the desired intervals. 
\end{proof}

\subsection{Second modification}
From $(f_0, g_0)$, we construct a one-parameter family of 
IFSs $(f_{\varepsilon}, g_{\varepsilon})$ as follows.

We fix a small positive number $k$ ($k < 1/(100)$ is enough) and modify $f_0$, $g_0$ to 
$(f_{\varepsilon}, g_{\varepsilon})$, where $\varepsilon$ is some non-negative 
small real number, keeping being continuous and so that the following holds:
\begin{itemize}
\item On $(1-k, 1)$, $f_{\varepsilon}$ is an affine map with slope $(1/2) + \varepsilon$,
\item on $(0, k)$, $g_{\varepsilon}$ is an affine map with slope $(1/2) + \varepsilon$,
\item and keep the dynamics outside intact.
\end{itemize}

Note that there exists small $\delta > 0$ so that for every $\varepsilon \in (0, \delta)$
the pair $(f_{\varepsilon}, g_{\varepsilon})$
satisfies the condition $\mathcal{A}$, single overlapping 
property $\mathsf{So}$ and hole property $\mathsf{Ho}$. 
Hence the notions such as $F_i$, $G_i$, $W$, $R_f$ and $R_g$ make sense 
for $(f_\varepsilon, g_\varepsilon)$.
These objects vary as $\varepsilon$ varies.
To clarify the dependence, we put $\varepsilon$ to them. 
For example, $F_{2}^{\varepsilon}$ denotes the interval $F_2$ 
for $(f_{\varepsilon}, g_{\varepsilon})$. 
Note that if $k$ is chosen sufficiently small then 
$H_p$ and  $H_q$ do not depend on $\varepsilon$. 
Thus we do not put $\varepsilon$ to them.

We are interested in the shape of 
$R_{f_\varepsilon}^\varepsilon$ and $R_{g_\varepsilon}^\varepsilon$ in $W^\varepsilon$.
To compare them of different $\varepsilon$, we introduce the following notion. 
Let $\varepsilon_0, \varepsilon_1 \in (0, \delta)$ be two different numbers. 
Then we define $\phi_{\varepsilon_0, \varepsilon_1}$ to be the unique 
orientation preserving affine homeomorphism from
$W^{\varepsilon_0}$ to $W^{\varepsilon_1}$.

In $W^{\varepsilon}$, we define two sequences of intervals.
For $n \geq 0$, we define $P^{\varepsilon}_n$ as follows:
\[
P^{\varepsilon}_n := \{ x \in W^{\varepsilon} \mid  f_{\varepsilon}^{-n} \circ g_{\varepsilon}^{-1} (x) \in H_p\}. \quad
\]
Similarly, we define $Q^{\varepsilon}_n$ as follows:
\[
Q^{\varepsilon}_n := \{ x \in W^{\varepsilon} \mid  g_{\varepsilon}^{-n} \circ f_{\varepsilon}^{-1} (x) \in H_q\}. \quad
\]
For fixed $\varepsilon$, the interval
$P^{\varepsilon}_n$, $Q^{\varepsilon}_n$ may be empty, but for large $n$ it is not empty. 
Note that $\{ Q^{\varepsilon}_n\}$ accumulates to $f_{\varepsilon}(1)$,
$\{ P^{\varepsilon}_n\}$ to $g_{\varepsilon}(0)$.
We also have
$R_{g_\varepsilon}^{\varepsilon} = \coprod P^{\varepsilon}_n$ and 
$R_{f_\varepsilon}^{\varepsilon} = \coprod Q^{\varepsilon}_n$.

Then we define $C \subset (0, \delta)$ as follows:
\[
C := \{ \varepsilon \in (0, \delta) \mid g_{\varepsilon}(0) \in R^{\varepsilon}_{f_{\varepsilon}},
f_{\varepsilon}(1) \in R^{\varepsilon}_{g_{\varepsilon}} \}.
\]
In the definition of $C$, the condition $f_{\varepsilon}(1) \in R^{\varepsilon}_{g_{\varepsilon}}$ is in fact redundant, 
because of the symmetry.

Now we prove the following (later we will prove a stronger statement):
\begin{prop}
$C$ is not empty. 
\end{prop}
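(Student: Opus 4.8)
The plan is to exhibit an explicit parameter $\varepsilon \in (0,\delta)$ at which $g_\varepsilon(0)$ lands inside one of the intervals $P^\varepsilon_n$ forming $R^\varepsilon_{f_\varepsilon}$ (the condition on $f_\varepsilon(1)$ then being automatic by the symmetry $1 - f(1-x) = g(x)$). Rather than controlling a fixed $n$, I would use a counting/covering argument: show that as $\varepsilon$ ranges over $(0,\delta)$, the point $g_\varepsilon(0)$ sweeps through positions that cannot avoid the ruination region $R^\varepsilon_{f_\varepsilon}$, because near $f_\varepsilon(1)$ (and hence, after translating, throughout $W^\varepsilon$) the complement $W^\varepsilon \setminus R^\varepsilon_{f_\varepsilon}$ consists of intervals that are too short.

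The key steps, in order, are as follows. First, I would normalise the picture by transporting everything into a fixed reference interval via the affine maps $\phi_{\varepsilon_0,\varepsilon_1}$ introduced just above, so that $W^\varepsilon$ becomes a single interval and the images of $P^\varepsilon_n$ and of the point $g_\varepsilon(0)$ become comparable curves in $\varepsilon$. Second, I would estimate the geometry of the intervals $P^\varepsilon_n$: since $\mathsf{Ee}$-type expansion is built in (the slopes of $f_\varepsilon, g_\varepsilon$ near the fixed points are $(1/2)+\varepsilon < 1$, so the inverse branches expand by a definite factor $\mu > 1$), the preimages $f_\varepsilon^{-n}\circ g_\varepsilon^{-1}$ contract at a uniform exponential rate; hence $|P^\varepsilon_n|$ and the gaps between consecutive $P^\varepsilon_n$ both shrink geometrically, and their ratio is bounded. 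This gives a lower bound on the ``density'' of $\bigsqcup_n P^\varepsilon_n$ in a fixed neighbourhood of $g_\varepsilon(0)$. Third, I would show that $g_\varepsilon(0) = g_\varepsilon^2(0) \cdot$-type quantities depend on $\varepsilon$ in a way whose derivative (or variation) over $(0,\delta)$ is bounded below, so that $g_\varepsilon(0)$ genuinely moves relative to the (slowly varying) intervals $P^\varepsilon_n$ as $\varepsilon$ changes; combining this with the density estimate, an intermediate-value / pigeonhole argument forces $g_\varepsilon(0) \in R^\varepsilon_{f_\varepsilon}$ for some $\varepsilon$.

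The main obstacle I expect is step three together with the interaction of step two: one must make sure that the motion of $g_\varepsilon(0)$ in $\varepsilon$ is not accidentally synchronised with the motion of the intervals $P^\varepsilon_n$ so as to always fall in a gap. The clean way around this is to localise near the accumulation point $g_\varepsilon(0)$ of $\{P^\varepsilon_n\}$ itself: there the intervals $P^\varepsilon_n$ accumulate, so for $\varepsilon$ slightly positive the point $g_\varepsilon(0)$ is already close to an accumulation point of $R^\varepsilon_{f_\varepsilon}$, and one only needs the gaps at that scale to be small compared to the perturbation size — which follows from the geometric estimates and from choosing $\delta$ appropriately. Since the proposition only asserts non-emptiness (the announced stronger statement, presumably that $C$ contains an interval or has an interior point, is deferred), a single well-chosen $\varepsilon$ suffices, so I would not optimise the bounds, only verify that the qualitative picture closes up.
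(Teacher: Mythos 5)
Your plan has the right overall shape (vary $\varepsilon$, force the marked point into the ruination region, dispose of the second condition by symmetry), but two things go wrong. First, a mix-up of the two families: the condition $g_\varepsilon(0)\in R^{\varepsilon}_{f_\varepsilon}$ concerns the intervals $Q^{\varepsilon}_n$, which accumulate at the \emph{opposite} endpoint $f_\varepsilon(1)$ of $W^{\varepsilon}$; the intervals $P^{\varepsilon}_n$ that accumulate at $g_\varepsilon(0)$ make up $R^{\varepsilon}_{g_\varepsilon}$, which is irrelevant to this condition. So your escape route for the synchronisation problem --- ``localise near the accumulation point $g_\varepsilon(0)$ of $\{P^\varepsilon_n\}$, where the point is automatically close to the ruination set'' --- addresses the wrong set and does not bear on whether $g_\varepsilon(0)$ lies in some $Q^\varepsilon_n$. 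Second, the density estimate you lean on is not available: as you yourself note, the gaps between consecutive $Q^\varepsilon_n$ are \emph{comparable} to the intervals (bounded ratio), not negligibly short, so the complement has definite relative size at every scale and a single point can a priori sit in a gap for all $\varepsilon$ if point and intervals move in step. Since the $Q^\varepsilon_n$ are defined through $f_\varepsilon^{-1}$ and $g_\varepsilon^{-n}$ they really do move with $\varepsilon$, so the synchronisation issue you flag is the actual crux, and the proposal as written does not resolve it.

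The missing idea, which is how the paper proceeds, is to \emph{freeze the target}: rewrite $\varepsilon\in C$ as $f_\varepsilon^{-1}(g_\varepsilon(0))\in H'_p$, where $H'_p=\bigcup_{n\ge 0} g_\varepsilon^{n}(H_p)$ is a union of intervals accumulating at $1$ that is \emph{independent of} $\varepsilon$ (the modification producing $(f_\varepsilon,g_\varepsilon)$ is supported in $(1-k,1)$ and $(0,k)$, away from this orbit of $H_p$). Then one only needs that the single moving point $f_\varepsilon^{-1}(g_\varepsilon(0))$ depends continuously and monotonically on $\varepsilon$ and tends to $1$ as $\varepsilon\searrow 0$; by the intermediate value theorem it must cross each interval $g^{n}(H_p)$ with $n$ large, producing whole parameter intervals $C_n\subset C$ (this is also what feeds the ``stronger statement'' proved next). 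No estimate on lengths or densities of the $Q^\varepsilon_n$, and no pigeonhole, is needed once the target set no longer moves; without that normalisation your argument does not close.
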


\begin{proof}
Consider the set $H'_p := \cup_{n \geq0} \, g_{\varepsilon}^{n}(H_p)$. 
This is a disjoint union of intervals converging to $1$, and we can see that 
$H'_p$ does not depend on $\varepsilon$.

The condition $\varepsilon \in C$ is equivalent to $f^{-1}_{\varepsilon} (g_{\varepsilon}(0)) \in H'_p$.
So we are interested in how $f^{-1}_{\varepsilon} (g_{\varepsilon}(0))$ varies when $\varepsilon$ varies.
By definition, we can see that for $\varepsilon$ near $0$,
$f_{\varepsilon}^{-1} ( g_{\varepsilon}(0))$ is monotone increasing as $\varepsilon$ decreases
and it converges to $1$ when $\varepsilon \searrow 0$. 
Thus, for sufficiently large $n$, there is an interval $C_n \subset C$ such that for 
$\varepsilon \in C_n$
we have $f_{\varepsilon}^{-1} ( g_{\varepsilon}(0)) \in g_{\varepsilon}^{n}(H_p)$.
In particular, $C$ is not empty.
Finally, note that 
because of the symmetry we do not need to pay attention to $f_{\varepsilon}(1) \in R_f$.
\end{proof}

We fix $\alpha \in C$. Then we claim the following:
\begin{prop}
Let $\alpha \in C$. 
There exists a sequence $(\alpha_n) \subset C$ such that 
$\alpha_n \searrow 0$, $\alpha_0=\alpha$, 
$\phi_{\alpha_{m}, \alpha_n}(R_{f_{\alpha_m}}^{\alpha_m}) =R_{f_{\alpha_n}}^{\alpha_n}$ and 
$\phi_{\alpha_{m}, \alpha_n}(R_{g_{\alpha_m}}^{\alpha_m}) =R_{g_{\alpha_n}}^{\alpha_n}$
for every $m$ and $n$.
\end{prop}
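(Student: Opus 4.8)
The plan is to exploit the explicit form of $(f_{\varepsilon},g_{\varepsilon})$: after rescaling $W^{\varepsilon}$ affinely onto a fixed interval, the sets $\coprod_{n}P^{\varepsilon}_{n}$ and $\coprod_{n}Q^{\varepsilon}_{n}$ become literally self-similar in $\varepsilon$ under a dilation of the parameter, and one then takes $\alpha_{n}$ along the corresponding geometric progression. First I would record the relevant local data. From the construction, $g_{\varepsilon}(0)=\tfrac12-\varepsilon k$ and $f_{\varepsilon}(1)=\tfrac12+\varepsilon k$, so $W^{\varepsilon}=[\tfrac12-\varepsilon k,\tfrac12+\varepsilon k]$ is centred at $\tfrac12$ with length $2\varepsilon k$; moreover $g_{\varepsilon}(y)=(\tfrac12-\varepsilon k)+(\tfrac12+\varepsilon)y$ for $y\in[0,k]$, and $f_{\varepsilon}$ is multiplication by $\tfrac12$ on $[0,\tfrac13-\tfrac1{200}]$. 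Since $f_{0}(J_{p})=f_{\ast}(J_{p})$ is a fixed interval around $\tfrac16$, the forward orbit $\{f_{\varepsilon}^{n}(H_{p})\}_{n\ge0}$ is independent of $\varepsilon$, and with $E:=f_{0}(H_{p})$ one has $f_{\varepsilon}^{n}(H_{p})=2^{-(n-1)}E$ for every $n\ge1$. Using $P^{\varepsilon}_{n}=W^{\varepsilon}\cap g_{\varepsilon}(f_{\varepsilon}^{n}(H_{p}))$ together with the fact that $W^{\varepsilon}$ shrinks to $\{\tfrac12\}$ as $\varepsilon\to0$, a direct estimate shows that for $\varepsilon$ small only indices $n$ with $2^{-(n-1)}E\subset(0,k)$ can meet $W^{\varepsilon}$; in particular the finitely many smaller indices — including the single transitional one at which $2^{-(n-1)}E$ straddles the corner $y=k$ of $g_{\varepsilon}$ — give $P^{\varepsilon}_{n}=\emptyset$. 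Hence, for $\delta$ small enough (a constraint depending only on $k$ and $J_{p}$, which we impose),
\[
\coprod_{n}P^{\varepsilon}_{n}=\bigcup_{m\ge0}\bigl(W^{\varepsilon}\cap g_{\varepsilon}(2^{-m}E)\bigr),
\]
with $g_{\varepsilon}$ genuinely affine on every piece occurring; the corresponding formula for $\coprod_{n}Q^{\varepsilon}_{n}$ follows from the symmetry $1-f_{\varepsilon}(1-x)=g_{\varepsilon}(x)$.

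Next I would normalise. Put $N_{\varepsilon}\colon W^{\varepsilon}\to[-1,1]$, $N_{\varepsilon}(x)=(x-\tfrac12)/(\varepsilon k)$; by uniqueness of the orientation-preserving affine homeomorphism, $\phi_{\varepsilon_{0},\varepsilon_{1}}=N_{\varepsilon_{1}}^{-1}\circ N_{\varepsilon_{0}}$, and $N_{\varepsilon}$ conjugates $x\mapsto1-x$ on $W^{\varepsilon}$ to $t\mapsto-t$. From $g_{\varepsilon}(y)=(\tfrac12-\varepsilon k)+(\tfrac12+\varepsilon)y$ one computes $N_{\varepsilon}(g_{\varepsilon}(y))=-1+\frac{(1+2\varepsilon)y}{2\varepsilon k}$, whence
\[
N_{\varepsilon}\Bigl(\coprod_{n}P^{\varepsilon}_{n}\Bigr)=\bigcup_{m\ge0}\bigl((-1+\mu_{m}(\varepsilon)E)\cap[-1,1]\bigr),\qquad \mu_{m}(\varepsilon):=2^{-m}\lambda(\varepsilon),\quad \lambda(\varepsilon):=\frac{1+2\varepsilon}{2\varepsilon k},
\]
and $N_{\varepsilon}(\coprod_{n}Q^{\varepsilon}_{n})=-N_{\varepsilon}(\coprod_{n}P^{\varepsilon}_{n})$. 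The key point is that $-1+\mu E$ meets $[-1,1]$ only when $\mu\le c:=2/\min E$, so the right-hand side depends on $\varepsilon$ only through the finite set $\Lambda(\varepsilon)\cap(0,c]$, where $\Lambda(\varepsilon):=\{2^{-m}\lambda(\varepsilon):m\ge0\}$. Note also $\lambda(\varepsilon)>1/k>c$ for every $\varepsilon$ (indeed $1/k>100$ while $c<14$, since $\min E>1/7$).

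Finally, $\lambda$ is a decreasing bijection of $(0,\delta)$ onto $(\lambda(\delta),\infty)$, so I define $\alpha_{n}\in(0,\delta)$ by $\lambda(\alpha_{n})=2^{n}\lambda(\alpha)$. Then $\alpha_{0}=\alpha$, and $\lambda(\alpha_{n})\to\infty$ forces $\alpha_{n}\searrow0$. Moreover $\Lambda(\alpha_{n})=\{2^{l}\lambda(\alpha):l\le n\}$; since $\lambda(\alpha)>c$, any element of $\Lambda(\alpha_{n})$ that is $\le c$ has the form $2^{l}\lambda(\alpha)$ with $l<0$, hence already belongs to $\Lambda(\alpha_{0})$, so $\Lambda(\alpha_{n})\cap(0,c]=\Lambda(\alpha)\cap(0,c]$ for all $n$. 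Therefore $N_{\alpha_{n}}(\coprod_{m}P^{\alpha_{n}}_{m})$ and $N_{\alpha_{n}}(\coprod_{m}Q^{\alpha_{n}}_{m})$ do not depend on $n$, and applying $N_{\alpha_{n}}^{-1}\circ N_{\alpha_{m}}=\phi_{\alpha_{m},\alpha_{n}}$ yields the two asserted identities for all $m$ and $n$. It remains to check $\alpha_{n}\in C$: by the symmetry, both conditions defining $C$ say precisely that $+1\in N_{\varepsilon}(\coprod_{m}P^{\varepsilon}_{m})$ (an endpoint of $W^{\varepsilon}$ lies in the corresponding ruination region); since $\alpha\in C$ this normalised set contains $+1$, and as it is the same for every $\alpha_{n}$ we conclude $\alpha_{n}\in C$.

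The only genuinely delicate step is the reduction in the first paragraph: identifying precisely which iterates $f_{\varepsilon}^{n}(H_{p})$ return into $W^{\varepsilon}$ under $g_{\varepsilon}$, and verifying that the nonlinear parts of $f_{0}$, $g_{0}$ (inside $J_{p}$, $J_{q}$, and near the corners $y=k$, $y=1-k$) never affect the pieces that actually meet $W^{\varepsilon}$ once $\delta$ is small. After that, the rescaling, the geometric choice of $(\alpha_{n})$ and the verification $\alpha_{n}\in C$ are essentially formal.
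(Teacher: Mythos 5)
Your proof is correct in substance and, at bottom, it is the paper's argument written in explicit coordinates. The paper works abstractly: it sets $H'_p=\cup_{n\ge0}g_\varepsilon^n(H_p)$, uses its self-similarity $g_\varepsilon(H'_p)=H'_p\cap g_\varepsilon(I)$ and the monotonicity of $\varepsilon\mapsto f_\varepsilon^{-1}(g_\varepsilon(0))$ to define $\alpha_n$ by $f_{\alpha_n}^{-1}\circ g_{\alpha_n}(0)=g_{\alpha_0}^{\,n}\circ f_{\alpha_0}^{-1}\circ g_{\alpha_0}(0)$, and then transports the pattern by a commutative diagram. Since $f_\varepsilon^{-1}(g_\varepsilon(0))=1-2/\lambda(\varepsilon)$ with your $\lambda(\varepsilon)=(1+2\varepsilon)/(2\varepsilon k)$, and $g_{\alpha_0}$ halves the distance to $1$ on the relevant region, your condition $\lambda(\alpha_n)=2^{n}\lambda(\alpha)$ defines literally the same sequence; your normalization $N_\varepsilon$ and the description of the pattern as $\bigcup_m(-1+2^{-m}\lambda(\varepsilon)E)\cap[-1,1]$ is the concrete form of the paper's diagram, and it has the virtue of making the self-similarity and the verification $\alpha_n\in C$ completely explicit (including the reduction of both conditions in $C$ to $+1$ belonging to the normalized set, which is the paper's "redundancy by symmetry" remark). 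The one deviation to flag: your piece-identification (only the iterates $2^{-m}E$ lying in $(0,k)$, where $g_\varepsilon$ is affine, can meet $W^\varepsilon$, and the transitional piece contributes nothing) is proved only after imposing an extra smallness constraint on $\delta$, so strictly you establish the proposition for $C$ defined with a possibly smaller $\delta$ than the one fixed earlier, whereas the paper's structural argument applies to every $\alpha\in C$ as given. The needed bound is explicit (roughly $\varepsilon$ below a constant of order $1/4$, governed by $\min E/\max E$ and $\varepsilon<1/2$), and since the paper only uses the proposition to produce examples this shrinking is harmless; but to match the statement verbatim you should either check that the standing choice of $\delta$ already satisfies your constraint or carry out the piece-identification for all $\varepsilon\in(0,\delta)$, which your estimates essentially already do.
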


\begin{proof}
Again consider the set $H'_p := \cup_{n \geq0} \, g_{\varepsilon}^{n}(H_p)$. 
By definition, this set has a self-similarity in the following sense:
$g_{\varepsilon}(H'_p) = H_p' \cap g_{\varepsilon}(I) $.

For $n \geq 0$, we define the number $\alpha_n$ such that 
$f^{-1}_{\alpha_n}\circ g_{\alpha_n}(0) =  g_{\alpha_0}^{n}\circ f_{\alpha_0}^{-1}\circ g_{\alpha_0}(0)$ holds. 
For each $n \geq 0$ such $\alpha_n \in C$ exists uniquely because of the 
monotonicity of $f_{\varepsilon}^{-1}\circ g_{\varepsilon}(0)$ with respect to $\varepsilon$.
By construction we can see that $\alpha_n \searrow 0$ as $n \to +\infty$.

Then, for this sequence $(\alpha_n)$, consider the following diagram:
\begin{center}
$ \xymatrix{
\ar[d]_{\phi_{\alpha_m, \alpha_n}} W^{\alpha_m} \ar[rr]_{ f^{-1}_{\alpha_m} \hspace{20pt}}  & &
(f_{\alpha_m}^{-1} \circ g_{\alpha_m}(0), 1) \ar[d]_{g_{\alpha_0}^{(n-m)}} & &
\ar@{_{(}->}[ll] H'_{p} \ar@{^{(}->}[lld] \ar[d]^{g_{\alpha_0}^{(n-m)}} \\
W^{\alpha_n} \ar[rr]^{f^{-1}_{\alpha_n} \hspace{20pt}}  & &
(f_{\alpha_n}^{-1} \circ g_{\alpha_m}(0), 1)  & &
\ar@{_{(}->}[ll] H'_{p}}$
\end{center}
Together with the self similarity of $H'_p$ and the equality
$R^{\alpha_n}_{g_{\alpha_n}} = f_{\alpha_n}(H'_p)  \cap W^{\alpha_n}$ for every $n \geq 0$, 
we see $\phi_{\alpha_m, \alpha_n}(R_{g_{\alpha_m}}^{\alpha_m}) =R_{g_{\alpha_m}}^{\alpha_m}$ holds
for every $m$ and $n$.
By the symmetry, we also have the equality for $R_{f_{\alpha_n}}^{\alpha_n}$.
\end{proof}

Now we are ready for the proof.
\begin{proof}[End of the proof of Proposition~\ref{p.sonzai}]
Let $(f_{\varepsilon}, g_{\varepsilon})$ as above. 
We fix the sequence $(\alpha_n)$. 
Then we take a $C^1$-diffeomorphism $\gamma :W^{\alpha_0} \to W^{\alpha_0}$
so that 
$W \subset \mathrm{int}(R_{f_{\alpha_0}}^{\alpha_0} \cup \gamma (R_{g_{\alpha_0}}^{\alpha_0}))$ holds. 
The existence of such $\gamma$ is easy to see using the fact that $\alpha_0 \in C$. 

Then, for each $n$ 
we modify $g_{\alpha_n}$ to $g_{\alpha_n, \bullet}$ so that for  $ x\in W^{\alpha_n}$, 
\[
(g_{\alpha_n, \bullet})^{-1}(x) =  
(g_{\alpha_n})^{-1} \circ ( \phi_{\alpha_n, \alpha_0})^{-1}\circ \gamma^{-1} \circ (\phi_{\alpha_n, \alpha_0})(x)
\]
holds, and keep intact outside. Then, this $(f_{\alpha_n}, g_{\alpha_n, \bullet})$ satisfies the 
single overlap, hole and castration property. 

We claim that for sufficiently large $n$, $(f_{\alpha_n}, g_{\alpha_n, \bullet})$ also satisfies the 
eventual expansion property $\mathsf{Ee}$. We are interested in the differential of the first return map. 
Note that, by increasing $n$, the number of the iteration for the first return increases, 
while the differential $(\phi_{\alpha_n, \alpha_1})^{-1}\circ \gamma \circ  \phi_{\alpha_n, \alpha_1}$ 
is intact. The differetial of $g_{\alpha_n}$ may vary, but it is a bounded change. 
Hence by taking sufficiently large $n$, we get the uniform expansion of the first return map
on $W^{\alpha_n}$.

The openness of the properties $\mathsf{So}$,  $\mathsf{Ca}$ and $\mathsf{Ee}$ 
are easy to see. 
The $C^1$-openness of $\mathsf{Ho}$ comes from the fact 
that the hypothesis in Proposition~\ref{p.ana} is $C^1$ (indeed, $C^0$) open property.
\end{proof}

\appendix 

\section{Another example}
After finding the previous example, Masayuki Asaoka
showed me another type of example. In this appendix we present it.

We fix a small number $\varepsilon> 0$ 
(in practice, $\varepsilon  = 1/(100)$ is enough) and 
another small number $\lambda >0 $ smaller than $1/2$. 

We define as follows:
\begin{center}
$I_{-1}:= [0, 1/3 -\varepsilon],\quad
I_{ 0 }:= [1/3 +\varepsilon, 2/3 -\varepsilon], \quad
I_{1}:= [2/3 +\varepsilon,  1]$.
\end{center}
Then take $(f, g) \in \mathcal{A}$ such that the following holds:

\begin{itemize}
\item $f(I_{-1}) \subset I_{-1}$, \quad $f(I_{0}) \subset \mathrm{int}(I_{-1})$,
\quad $f(I_{1}) \subset \mathrm{int}(I_{0})$.
\item $g(I_{1}) \subset I_{1}$, 
\quad $g(I_{0}) \subset \mathrm{int}(I_{1})$,
\quad $g(I_{-1}) \subset \mathrm{int}(I_{0})$.
\item $f'|_{I_{-1}\cup I_{0}\cup I_{1}}, g'|_{I_{-1}\cup I_{0}\cup I_{1} }< \lambda$.
\end{itemize}
We call the first and the second conditions {\it inclusion property}, 
and the last condition {\it strong uniform contraction property}.

The construction of such pair of maps in $\mathcal{A}$ is not such difficult 
(see the graph in Figure~\ref{fig.aliappe}), so we skip the detail. Note that these properties are 
open in $\mathcal{A}$ (the conditions $f(I_{-1}) \subset I_{-1}, g(I_{1}) \subset I_{1}$
are not open in $(\mathrm{Diff}^1_{\mathrm{im}})^2$, but in $\mathcal{A}$ they are).

\begin{figure}
\centering
\includegraphics{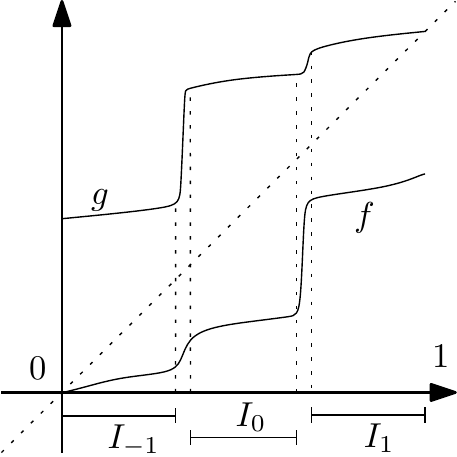}
 \caption{An example of IFS in the Appendix.}
\label{fig.aliappe}
\end{figure}

We claim the following:
\begin{prop}
The (unique) minimal set $K$ for above $(f, g)$ is homeomorphic to the Cantor set.
\end{prop}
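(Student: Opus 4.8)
The plan is to run the measure-theoretic argument alluded to in the introduction. As in Section~\ref{ss.setumei}, $K$ is compact and metrizable for free, and it is perfect as soon as it is infinite (see the discussion there); infiniteness is clear since $g^{n}(0)\in\mathcal{O}_{+}(0)\subset K$ for all $n\ge 0$ and these points are pairwise distinct, because $g(0)>0$ and $g(x)>x$ on $(0,1)$ force the sequence $(g^{n}(0))_{n}$ to be strictly increasing. By the characterization of the Cantor set recalled in Section~\ref{ss.setumei}, it therefore suffices to show that $K$ has empty interior, and since we work in dimension one it is enough to prove $\mathrm{Leb}(K)=0$.

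The first step is to extract the combinatorial content of the inclusion property. Put $\Sigma:=I_{-1}\cup I_{0}\cup I_{1}$. The inclusion property gives $f(\Sigma)\subset\Sigma$ and $g(\Sigma)\subset\Sigma$, and moreover each of $f,g$ maps every $I_{j}$ into a single $I_{k}$; hence for any word $w=w_{1}\circ\cdots\circ w_{n}$ with letters in $\{f,g\}$ and any $j$, the image $w(I_{j})$ is contained in a single $I_{k}$, and in particular every intermediate image $w_{i}\circ\cdots\circ w_{n}(I_{j})$ lies inside $\Sigma$. Feeding this into the strong uniform contraction property $|f'|,|g'|<\lambda$ on $\Sigma$ gives $|w(I_{j})|\le\lambda^{n}$ for every word $w$ of length $n$. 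Consequently $\bigcup_{|w|=n}w(\Sigma)$, a union of $3\cdot 2^{n}$ compact intervals, has total length at most $3(2\lambda)^{n}$.

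The second step is to show $\mathcal{O}_{+}(0)\subset\bigcup_{|w|=n}w(\Sigma)$ for every $n\ge 1$. Since $0=f(0)$, every point of $\mathcal{O}_{+}(0)$ is $w(0)$ for some word $w$ which, after right-padding with copies of $f$ if necessary, we may take of length $\ge n$; writing $w=u\circ v$ with $|u|=n$ we get $w(0)=u(v(0))$ with $v(0)\in\Sigma$ (as $0\in\Sigma$ and $\Sigma$ is forward invariant), so $w(0)\in u(\Sigma)$. As the union on the right is closed, it follows that $K=\overline{\mathcal{O}_{+}(0)}\subset\bigcup_{|w|=n}w(\Sigma)$, whence $\mathrm{Leb}(K)\le 3(2\lambda)^{n}$ for every $n$. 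Letting $n\to\infty$ and using $\lambda<1/2$ yields $\mathrm{Leb}(K)=0$, so $K$ has empty interior and we are done.

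I do not expect a genuine obstacle. The one point requiring care is the length estimate $|w(I_{j})|\le\lambda^{n}$: since strong uniform contraction is assumed only on $\Sigma$ and not on the two gaps $(1/3-\varepsilon,1/3+\varepsilon)$ and $(2/3-\varepsilon,2/3+\varepsilon)$, one must invoke the inclusion property to be sure that all intermediate images remain inside $\Sigma$ before applying the derivative bound. The hypothesis $\lambda<1/2$ is exactly what defeats the factor $2^{n}$ counting words of length $n$, which is precisely the role of that bound in the construction.
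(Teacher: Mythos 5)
Your proof is correct and is essentially the paper's own argument: your union $\bigcup_{|w|=n}w(\Sigma)$ is exactly the paper's nested sequence $\Lambda_n$ (defined by $\Lambda_0=\Sigma$, $\Lambda_{k+1}=f(\Lambda_k)\cup g(\Lambda_k)$), and your per-word length estimate $\lambda^n$ with the $2^n$ word count is the same bound $\mu(\Lambda_n)\lesssim(2\lambda)^n$ that the paper obtains by induction, yielding $\mu(K)=0$ and hence empty interior. The care you take with intermediate images staying in $\Sigma$ is the right (implicit) point in the paper's induction as well, so there is nothing to fix.
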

For the proof, remember that
we only need to prove the emptiness of the interior of the minimal set (see Section~\ref{ss.setumei}).
For that we prove that the Lebesgue measure of $K$ is zero. We denote the 
Lebesgue measure on $I$ by $\mu$.

First, we define the sequence of compact sets $(\Lambda_n)$ as follows:
\begin{itemize}
\item $\Lambda_0 := I_{-1} \cup I_{0} \cup I_{-1}$.
\item $\Lambda_{k+1} := f(\Lambda_k) \cup g(\Lambda_k)$.
\end{itemize}
By the inclusion property of $f$ and $g$, we know that $(\Lambda_n)$
is a nested sequence of compact sets. Thus $\Lambda := \cap \Lambda_n$
is a non-empty compact set. Furthermore, by definition we see that $\mathcal{O}_{+}(0) \subset \Lambda$.
Thus we have $K \subset \Lambda$ (indeed, we can prove the equality between $K$ and $\Lambda$ but 
since we do not need it we omit it). 

Then we claim the following:
\begin{clm}
$\mu(\Lambda) =0$, in particular $\mu(K) =0$.
\end{clm}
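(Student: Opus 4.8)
The plan is to show that $\mu(\Lambda_{k+1}) \le c\,\mu(\Lambda_k)$ for some fixed constant $c < 1$, which immediately forces $\mu(\Lambda) = \lim_k \mu(\Lambda_k) = 0$ and hence $\mu(K) = 0$ since $K \subset \Lambda$. The key point is that the recursion $\Lambda_{k+1} = f(\Lambda_k) \cup g(\Lambda_k)$ is a union of two pieces, each of which is an image of $\Lambda_k$ under a map whose derivative is bounded above by $\lambda < 1/2$ on the relevant region; so naively each piece has measure at most $\lambda\,\mu(\Lambda_k)$ and the union has measure at most $2\lambda\,\mu(\Lambda_k)$. Since $\lambda < 1/2$, this gives $c = 2\lambda < 1$ and we are done.

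First I would verify that the strong uniform contraction hypothesis actually applies: we need $f(\Lambda_k)$ and $g(\Lambda_k)$ to lie in the region $I_{-1}\cup I_0 \cup I_1$ where the derivative bound $f', g' < \lambda$ holds. This follows from the inclusion property together with induction: $\Lambda_0 = I_{-1}\cup I_0 \cup I_1$, and if $\Lambda_k \subset I_{-1}\cup I_0\cup I_1$ then the inclusion property gives $f(\Lambda_k) \subset I_{-1}\cup I_0$ and $g(\Lambda_k)\subset I_0 \cup I_1$, so $\Lambda_{k+1} \subset I_{-1}\cup I_0\cup I_1$ as well. In particular $\Lambda_k$ is always contained in the set on which the contraction estimate is valid, so the derivative of $f$ (resp. $g$) restricted to $\Lambda_k$ is everywhere less than $\lambda$.

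Next I would apply the elementary measure estimate: if $h$ is a $C^1$ map with $|h'| < \lambda$ on a measurable set $A$ contained in its domain of differentiability, then $\mu(h(A)) \le \lambda\,\mu(A)$ (this is immediate for intervals and extends to general measurable sets by the standard change-of-variables/covering argument, or one may simply note $\Lambda_k$ is a finite union of intervals at each finite stage). Applying this with $h = f$ and $h = g$ on $A = \Lambda_k$ gives $\mu(f(\Lambda_k)) \le \lambda\,\mu(\Lambda_k)$ and $\mu(g(\Lambda_k)) \le \lambda\,\mu(\Lambda_k)$, hence
\[
\mu(\Lambda_{k+1}) \le \mu(f(\Lambda_k)) + \mu(g(\Lambda_k)) \le 2\lambda\,\mu(\Lambda_k).
\]
Iterating, $\mu(\Lambda_k) \le (2\lambda)^k \mu(\Lambda_0) \le (2\lambda)^k$, and since $2\lambda < 1$ we get $\mu(\Lambda_k) \to 0$. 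As $(\Lambda_k)$ is a nested sequence of compact sets with intersection $\Lambda$, continuity of measure from above gives $\mu(\Lambda) = 0$, and therefore $\mu(K) = 0$.

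I do not expect a genuine obstacle here; the only mild subtlety is making sure the derivative bound is invoked only on the region where it is assumed to hold, which is exactly what the inductive verification in the second paragraph secures. One should also take care that $\Lambda_0$ is written correctly as $I_{-1}\cup I_0 \cup I_1$ (the excerpt has an apparent typo writing $I_{-1}$ twice), but this does not affect the argument. Combined with the reduction in Section~\ref{ss.setumei} — a compact, metrizable, perfect, totally disconnected space is a Cantor set, and in dimension one zero Lebesgue measure forces empty interior hence total disconnectedness — this completes the proof of the Proposition.
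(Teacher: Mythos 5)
Your proposal is correct and follows essentially the same route as the paper: the inductive estimate $\mu(\Lambda_{k+1}) \le 2\lambda\,\mu(\Lambda_k)$ via the contraction bound, iterated to give $\mu(\Lambda_n) \le (2\lambda)^n \to 0$, and then $\mu(K)=0$ since $K \subset \Lambda$. Your extra care in checking that $\Lambda_k$ stays inside $I_{-1}\cup I_0\cup I_1$ where the derivative bound applies is a sensible (and correct) elaboration of a step the paper leaves implicit.
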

\begin{proof}
We prove that for every $n \geq 0$ we have $\mu(\Lambda_n) \leq (2\lambda)^{n}$, which concludes 
$\mu(\Lambda)=0$ for $\lambda < 1/2$.

The case $n=0$ is clear. Suppose the inequality is true for $n=k$ (where $k$ is some non-negative integer).
First, by definition we have 
\[
\mu(\Lambda_{k+1}) \leq \mu(f(\Lambda_{k})) + \mu(g(\Lambda_{k})).
\]
Then, by uniform contracting property we have 
\[
\mu(f(\Lambda_{k})) < \lambda \mu(\Lambda_{k}), \quad 
\mu(g(\Lambda_{k})) < \lambda \mu(\Lambda_{k}). \]

Thus we have 
\[
\mu(\Lambda_{k+1}) \leq (2\lambda)\mu(\Lambda_{k}) < (2\lambda)^{k+1},
\]
which completes the proof.
\end{proof}

\begin{rem}
The first example and the second one are conceptually different examples.
The first example is constructed by investigating the past behavior of the dynamics, 
while the second one is constructed by investigating the future behavior.
\end{rem}

\end{document}